\newtheorem{theorem}{Theorem}[section]
\newtheorem{lemma}[theorem]{Lemma}
\newtheorem{proposition}[theorem]{Proposition}
\newtheorem{corollary}[theorem]{Corollary}
\newtheorem{example}[theorem]{Example}
\newtheorem{conjecture}[theorem]{Conjecture}
\newtheorem{remark}[theorem]{Remark}
\begin{document}

\title{Equilibria of the field generated by point charges}

\author[Mykhailo Bilogliadov]{Mykhailo Bilogliadov}
\address{Department of Mathematics, Oklahoma State University, Stillwater, OK 74078, U.S.A.}
\email{mbilogli@math.okstate.edu}

\date{April 28, 2014}

\maketitle

\begin{abstract}
We consider a special case of Maxwell's problem on the number of equilibrium points of the Riesz potential $1/r^{2\beta}$ (where $r$ is the Euclidean distance and $\beta$ is the Riesz parameter) for positive unit point charges placed at the vertices of a regular polygon. We show that the equilibrium points are located on the perpendicular bisectors to the sides of the regular polygon, and study the asymptotic behavior of the equilibrium points with regard to the number of charges $n$ and the Riesz parameter $\beta$. Finally, we prove that for values of $\beta$ in a small neighborhood of $\beta=1$ the Riesz potential has only one equilibrium point different from the origin on each perpendicular bisector, and one equilibrium point at the origin.
\end{abstract}
\medskip
\medskip
\noindent
{\small{\textbf{Key words and phrases:}} Maxwell's problem, Riesz potential, Equilibrium points, Morse functions, Stable mappings}
\smallskip

\noindent
{\small{\textbf{Mathematics Subject Classification (2010):}} 31A10, 31A15, 58E05, 57N75}


\section{Introduction}

Consider a system of $n$ positive unit point charges in $\mathbb R^3$ located at points $x_i$, $i=1,2,\ldots,n$. They produce an electric field
\begin{equation*}
E(x)=\sum_{i=1}^n  \frac{x-x_i}{{| x-x_i|}^3}, \quad x_i,\, x\in\mathbb R^3.
\end{equation*}

Maxwell \cite{Maxwell} raised a question about the number of equilibrium points of the system of $n$ point charges, which is the number of points where $E(x)$ vanishes.
This is the same as the number of critical points of the Coulomb potential $U(x)$ associated with this electrostatic field $E(x)$. Maxwell conjectured
that the number of equilibrium points for the system of $n$ point charges has an upper bound $(n-1)^2$. In 2007, Gabrielov, Novikov, and Shapiro \cite{GNS} showed the
existence of an upper bound of $4^{n^2}(3n)^{2n}$ for any non-degenerate configuration of charges in $\mathbb R^3$. Killian \cite{Killian} improved the result obtained in \cite{GNS} in the case when all charges lie in a plane. Namely, he established that for $n$ charges in the plane
there is an upper bound of $(2^{n-1}(3n-2))^2$ on the number of equilibrium points. He also showed that in the case when unit point charges are located at the vertices of an equilateral triangle, there are exactly four equilibrium points. Peretz \cite{Peretz} proved that in the case of three positive point charges on a plane,  there must be at least two equilibrium points, and only an even number of them, i.e. 2, 4, 6, 8, 10 or 12. Also Tsai  \cite{Tsai} considered the case of three point charges of arbitrary signs placed at the vertices of isosceles and equilateral triangle, and obtained an upper bound of 4 on the number of equilibrium points.

In this paper we show that in the case when $n$ positive unit point charges are placed at the vertices of a regular $n$-gon, the Riesz potential $1/r^{2\beta}$ (where $r$ is the Euclidean distance) generated by this system has critical points located on the perpendicular bisectors to the sides of the polygon, as well as at the origin. For convenience, a critical point that is different from the origin will be called non-trivial. We also investigate the asymptotic behavior of the critical points with respect to $n$ and $\beta$. Namely, for a fixed $\beta$ and large $n$ we prove that all the non-trivial critical points become equidistributed on the unit circumference. When $n$ is fixed and $\beta$ is approaching zero, we prove that the non-trivial critical points slide to the origin, while for $\beta$ infinitely large we obtain that the distance from the origin to an equilibrium point on a bisector is bounded above by $\cos(\pi/n)$. Finally, using the techniques of the theory of stable mappings we show that for values of the parameter $\beta$ in a small neighborhood of $\beta=1$, the Riesz potential $1/r^{2\beta}$ has only one equilibrium point different from the origin on each perpendicular bisector, and one equilibrium point at the origin.

For simplicity we may further assume that charges are located in the $xy$-plane. The Riesz potential of this configuration is 
\begin{equation*}
U_\beta(x,y,z)=\sum_{j=1}^n \frac{1}{((x-x_j)^2+(y-y_j)^2+z^2)^{\beta}}.
\end{equation*}
We immediately see that the condition $\partial U_\beta(x,y,z)/\partial z=0$ yields $z=0$ for all $(x,y,z)\in\mathbb R^3$. Hence we only need to study
the critical points of the restriction of the 3D Riesz potential to the $xy$-plane. 


\section{Riesz potential of charges located at the roots of unity}
 
We note that the Riesz potential is a natural and straightforward generalization of the Coulomb potential. Therefore, we will be considering Maxwell's 
problem for the Riesz potential making  digressions to the Coulomb potential where it deems necessary.\\\indent
 Let $n$ positive unit point charges be placed at the vertices of a regular $n$-gon inscribed in a unit circumference. The Riesz potential of this system is
\begin{equation}\label{rieszpotential}
U_{\beta}(r,\theta)=\sum_{j=1}^n|z-\zeta_n^j|^{-2\beta},\quad n\geq3,\,\beta\in (0,1),
\end{equation}
where $\zeta_n=e^{2\pi i /n}$ is the $n$-th primitive root of unity and $z=re^{i\theta}$, with $r\in[0,1)$ and $\theta\in[0,2\pi)$.
We can rewrite the potential ($\ref{rieszpotential}$) as
\begin{equation}\label{rieszpotential1}
U_{\beta}(r,\theta)=\sum_{j=1}^n\frac{1}{(1+r^2-2r\cos(2\pi j/n-\theta))^{\beta}}.
\end{equation}

Note that for $\beta=1/2$, expression ($\ref{rieszpotential}$) reduces to the usual Coulomb potential
\begin{equation}\label{coulombpotential}
U(r,\theta)=\sum_{j=1}^n\frac{1}{(1+r^2-2r\cos(2\pi j/n-\theta))^{\frac{1}{2}}}.
\end{equation}

The following useful observation is due to Bang and Elmabsout  \cite{BangElmab}:

\begin{proposition}\label{bigbangtheorem}
Let $\alpha=q+\mu$, where $q$ is positive integer and $\mu\in(0,1)$. Then, for every $n\geq3$, real $\theta$ and real $r$ such that $0\leq r<1$  
\begin{empheq}{align}\label{stat1}
& \sum_{j=1}^n\frac{1}{(1+r^2-2r\cos(2\pi j/n-\theta))^{\alpha}}=\frac{n}{\Gamma(\alpha)\Gamma(1-\mu)}\times\\\nonumber
& \int_0^1\frac{t^{\alpha-1}}{(1-t)^{\mu}}\frac{\partial^q}{\partial t^q}\left\{\frac{t^q}{(1-tr^2)^{\alpha}}\frac{1-(tr)^{2n}}{1+(tr)^{2n}-2(tr)^n\cos(n\theta)}\right\}dt.
\end{empheq}
In the special case $\alpha=q+1$ we obtain that
\begin{empheq}{align}\label{stat2}
& \sum_{j=1}^n\frac{1}{(1+r^2-2r\cos(2\pi j/n-\theta))^{\alpha}}\\\nonumber
& =\left\{\frac{n}{q!}\frac{\partial^q}{\partial t^q}\left\{\frac{t^q}{(1-tr^2)^{\alpha}}\frac{1-(tr)^{2n}}{1+(tr)^{2n}-2(tr)^n\cos(n\theta)}\right\}\right\}_{t=1}.
\end{empheq}
\end{proposition}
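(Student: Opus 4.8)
The plan is to prove the identity by expanding both sides as a power series in $r$ and a Fourier series in $\theta$, and then matching coefficients term by term; the two analytic engines are the binomial (hypergeometric) series and the Poisson-kernel expansion, glued together by the Euler Beta integral. First I would rewrite each summand as $(1+r^2-2r\cos(2\pi j/n-\theta))^{-\alpha}=|1-re^{i\phi_j}|^{-2\alpha}=(1-re^{i\phi_j})^{-\alpha}(1-re^{-i\phi_j})^{-\alpha}$ with $\phi_j=2\pi j/n-\theta$. Since $0\le r<1$, the binomial series $(1-w)^{-\alpha}=\sum_{k\ge 0}\frac{(\alpha)_k}{k!}w^k$, where $(\alpha)_k=\Gamma(\alpha+k)/\Gamma(\alpha)$, converges absolutely, so I may multiply the two series and sum over $j$. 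The sum over $j$ is handled by the discrete orthogonality relation $\sum_{j=1}^n e^{2\pi i(m-l)j/n}=n$ when $n\mid(m-l)$ and $0$ otherwise. This collapses the double series to the diagonal blocks $m-l=nk$, $k\in\mathbb{Z}$, and, pairing $k$ with $-k$, exhibits the left-hand side as
\[
n\Big[\sum_{l\ge 0}\frac{(\alpha)_l^2}{(l!)^2}\,r^{2l}+2\sum_{k\ge 1}\sum_{l\ge 0}\frac{(\alpha)_{l+nk}(\alpha)_l}{(l+nk)!\,l!}\,r^{2l+nk}\cos(nk\theta)\Big].
\]

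Next I would expand the right-hand side in the same basis. The factor $(1-tr^2)^{-\alpha}$ expands as $\sum_{p\ge 0}\frac{(\alpha)_p}{p!}(tr^2)^p$, while the last factor is exactly the Poisson kernel, $\frac{1-(tr)^{2n}}{1+(tr)^{2n}-2(tr)^n\cos(n\theta)}=1+2\sum_{k\ge 1}(tr)^{nk}\cos(nk\theta)$, valid since $0\le tr<1$. Multiplying by $t^q$ and applying $\partial^q/\partial t^q$ sends $t^{q+p+nk}$ to $\frac{(q+p+nk)!}{(p+nk)!}t^{p+nk}$; integrating term by term against $\frac{t^{\alpha-1}}{(1-t)^\mu}$ produces the Beta integral $\int_0^1 t^{\alpha+p+nk-1}(1-t)^{-\mu}\,dt=\frac{\Gamma(\alpha+p+nk)\Gamma(1-\mu)}{\Gamma(\alpha+p+nk+1-\mu)}$. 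The decisive cancellation is the relation $\alpha+1-\mu=q+1$, so that $\Gamma(\alpha+p+nk+1-\mu)=(q+p+nk)!$ cancels the factorial produced by the derivative, the prefactor $\frac{n}{\Gamma(\alpha)\Gamma(1-\mu)}$ absorbs the remaining $\Gamma(1-\mu)$, and the surviving $\Gamma(\alpha+p)$ and $\Gamma(\alpha+p+nk)$ reconstruct $(\alpha)_p$ and $(\alpha)_{p+nk}$. With $p=l$ the two expansions agree coefficient by coefficient, which proves \eqref{stat1}.

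For the special case \eqref{stat2} I would pass to the limit $\mu\to 1^-$ in \eqref{stat1}. The normalized weight $\frac{(1-t)^{-\mu}}{\Gamma(1-\mu)}$ is an approximate identity concentrating at $t=1$ as $\mu\to 1$ (this is the Riemann--Liouville fractional integral of order $1-\mu$ tending to pointwise evaluation), so $\frac{1}{\Gamma(1-\mu)}\int_0^1\frac{t^{\alpha-1}}{(1-t)^\mu}g(t)\,dt\to g(1)$, while $\Gamma(\alpha)\to q!$; since $t^{\alpha-1}\to t^q$ equals $1$ at $t=1$, this yields exactly \eqref{stat2}. Alternatively \eqref{stat2} follows directly from the same coefficient-matching argument carried out at $\mu=1$ (where the Beta integral degenerates to evaluation at the endpoint $t=1$).

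I expect the main obstacle to be purely analytic bookkeeping: justifying the interchange of the double summation, the $q$-fold differentiation, and the $t$-integration. For each fixed $r<1$ all the series converge absolutely and geometrically, and near $t=1$ the integrand is dominated by the integrable singularity $(1-t)^{-\mu}$ with $\mu<1$, so dominated convergence together with uniform convergence on compact subintervals of $r\in[0,1)$ legitimizes every interchange. What remains is the careful Gamma-function algebra that makes the two families of coefficients coincide, which the identity $\alpha+1-\mu=q+1$ renders automatic.
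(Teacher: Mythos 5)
Your argument is correct, and it is essentially the same machinery the paper uses: the double binomial expansion of $|1-re^{i\phi_j}|^{-2\alpha}$, the roots-of-unity filter $\sum_{j=1}^n e^{2\pi i(m-l)j/n}$, and the Euler Beta integral, with the cancellation $\alpha+1-\mu=q+1$ doing the work for general $q$. Note that the paper does not actually prove the Proposition itself (it cites Bang--Elmabsout) and only writes out this computation for the $q=0$ corollary, so your coefficient-matching extension to $q\geq 1$, including the $\mu\to 1^-$ limit for the special case, is a legitimate and complete filling-in of what the paper leaves to the reference.
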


As a corollary we obtain an important integral representation of the Riesz potential ($\ref{rieszpotential}$):
\begin{corollary}\label{prop1}
For $\beta\in(0,1)$, $r\in[0,1)$ and $0\leq\theta<2\pi$, the potential $U_{\beta}(r,\theta)$ can be written as
\begin{equation}\label{intrep}
U_{\beta}(r,\theta)= \frac{n\sin(\pi\beta)}{\pi}\int_0^1\frac{t^{\beta-1}(1-t)^{-\beta}}{(1-r^2t)^\beta}\frac{1-(rt)^{2n}}{1+(rt)^{2n}-2(rt)^n\cos(n\theta)}\,dt.
\end{equation}
\end{corollary}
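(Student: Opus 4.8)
The plan is to read off (\ref{intrep}) as the endpoint case $q=0$, $\mu=\beta$ of the Bang--Elmabsout identity (\ref{stat1}). Writing the Riesz exponent as $\alpha=\beta$ with $\beta\in(0,1)$, its integer part is $q=0$ and its fractional part is $\mu=\beta$. Under this choice the operator $\partial^{q}/\partial t^{q}$ in (\ref{stat1}) is the identity and the factor $t^{q}$ equals $1$, so the right-hand side of (\ref{stat1}) collapses to
\begin{equation*}
\frac{n}{\Gamma(\beta)\Gamma(1-\beta)}\int_0^1\frac{t^{\beta-1}(1-t)^{-\beta}}{(1-r^2t)^{\beta}}\,\frac{1-(rt)^{2n}}{1+(rt)^{2n}-2(rt)^n\cos(n\theta)}\,dt .
\end{equation*}
Euler's reflection formula $\Gamma(\beta)\Gamma(1-\beta)=\pi/\sin(\pi\beta)$ then converts the constant $n/(\Gamma(\beta)\Gamma(1-\beta))$ into the prefactor $n\sin(\pi\beta)/\pi$ of (\ref{intrep}). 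Since the left-hand side of (\ref{stat1}) at $\alpha=\beta$ is exactly $U_\beta(r,\theta)$ as written in (\ref{rieszpotential1}), this identification finishes the proof.

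Before doing so I would record that the integral converges for the stated range of parameters: near $t=0$ the factor $t^{\beta-1}$ is integrable because $\beta>0$, near $t=1$ the factor $(1-t)^{-\beta}$ is integrable because $\beta<1$, and the two rational factors remain bounded for $r\in[0,1)$ and all $\theta$. This is precisely the place where the restriction $\beta\in(0,1)$ enters, and it explains why the corollary lives on the interval where $U_\beta$ was defined in (\ref{rieszpotential}).

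The one substantive issue, which I expect to be the main obstacle, is that Proposition \ref{bigbangtheorem} is stated for a \emph{positive} integer $q$, whereas here $q=0$. Rather than forcing the formula from the case $q\geq 1$, I would verify the $q=0$ instance directly by expanding both sides into Fourier cosine series in $\theta$ and matching coefficients. On the left, the Gegenbauer generating function $(1-2r\cos\phi+r^2)^{-\beta}=\sum_{k\geq0}C_k^{(\beta)}(\cos\phi)\,r^k$ together with the Fourier expansion of each $C_k^{(\beta)}(\cos\phi)$ into cosines $\cos(m\phi)$, summed over $\phi_j=2\pi j/n-\theta$, leaves only frequencies that are multiples of $n$, by the orthogonality relation $\sum_{j=1}^n e^{ip\,2\pi j/n}=n$ when $n\mid p$ and $0$ otherwise. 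On the right, I would expand the Poisson-type factor as $1+2\sum_{k\geq1}(rt)^{nk}\cos(nk\theta)$ and the factor $(1-r^2t)^{-\beta}$ as a binomial series, then integrate term by term against $t^{\beta-1}(1-t)^{-\beta}$ using the Beta integral $\int_0^1 t^{a-1}(1-t)^{b-1}\,dt=\Gamma(a)\Gamma(b)/\Gamma(a+b)$; absolute convergence for $r\in[0,1)$ legitimizes the interchange. Checking that the resulting coefficient of $\cos(nk\theta)$ agrees with the one produced on the left for every $k\geq0$ is the computational core of the argument, and once this matching is in hand the corollary follows from the two simplifications described above.
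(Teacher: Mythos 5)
Your proposal is correct, and you rightly flag that Proposition \ref{bigbangtheorem} as stated requires $q$ to be a positive integer, so the case $\alpha=\beta\in(0,1)$ cannot simply be read off from (\ref{stat1}). The direct verification you sketch as a fallback --- expanding $(1+r^2-2r\cos\phi)^{-\beta}$ into a (Gegenbauer/binomial) series, filtering the frequencies divisible by $n$ via $\sum_{j=1}^n e^{ip\,2\pi j/n}$, evaluating the coefficients with the Beta integral and Euler's reflection formula, and resumming the geometric series into the Poisson-type kernel --- is precisely the Ferrario--Portaluri argument that the paper reproduces, so your approach is essentially the same as the paper's.
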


Corollary $\ref{prop1}$ implies that in the case of Coulomb potential ($\beta=1/2$) we have
\begin{equation*}
U(r,\theta) = \frac{n}{\pi}\int_0^1\frac{t^{-1/2}(1-t)^{-1/2}}{(1-r^2t)^{1/2}}\frac{1-(rt)^{2n}}{1+(rt)^{2n}-2(rt)^n\cos(n\theta)}\,dt.
\end{equation*}


\section{Critical points of the Riesz potential}

Now we can use the integral representation of the Riesz potential $U_{\beta}(r,\theta)$ given in Corollary $\ref{prop1}$ to determine the location of the critical points of $U_{\beta}(r,\theta)$. It follows that the non-trivial critical points of the potential $U_{\beta}(r,\theta)$ are located on the perpendicular bisectors to the sides of a polygon, corresponding to the values of $\theta$ given by

\begin{equation}\label{critang}
\theta=\pi k/n,  \quad k=1,3,5\ldots,2n-1.
\end{equation}
More precisely we have
\begin{theorem}\label{locatcritpts}
Potential $U_{\beta}(r,\theta)$ has a non-trivial critical point on each perpendicular bisector to the sides of the regular polygon, and a critical point at the origin. Furthermore, each non-trivial critical point $(r,\theta)$ satisfies $$r\in(r_l(n,\beta), r_u(n)),$$ where
\begin{equation*}
r_l(n,\beta)=\left(\frac{\beta}{\beta+n}\right)^{\frac{1}{n-2}},\quad r_u(n)=\cos(\pi/n).
\end{equation*}
\end{theorem}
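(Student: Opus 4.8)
The plan is to reduce the two–dimensional criticality equations to a one–dimensional radial problem on each candidate ray, and then to locate the radial critical point by a sign analysis of the radial derivative. First I would differentiate the integral representation (\ref{intrep}) in $\theta$. Only the last factor depends on $\theta$, and its $\theta$–derivative carries an explicit factor $\sin(n\theta)$ while the remaining integrand is strictly positive for $r\in(0,1)$ and $t\in(0,1)$. Hence $\partial U_\beta/\partial\theta$ vanishes precisely when $\sin(n\theta)=0$, i.e.\ on the rays $\theta=m\pi/n$. Substituting $\cos(n\theta)=(-1)^m$ collapses the last factor of the integrand to $\frac{1-(rt)^n}{1+(rt)^n}$ on the bisector rays ($m$ odd) and to $\frac{1+(rt)^n}{1-(rt)^n}$ on the vertex rays ($m$ even). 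On a vertex ray both $(1-r^2t)^{-\beta}$ and $\frac{1+(rt)^n}{1-(rt)^n}$ are strictly increasing in $r$ for each fixed $t\in(0,1)$, so $U_\beta$ is strictly increasing in $r$ and carries no non-trivial critical point; this lets me restrict attention to the bisector rays, all of which are equivalent under the $n$-fold rotational symmetry, so it suffices to treat $\theta=\pi/n$. The criticality of the origin follows from that same symmetry: for $n\ge 3$ the gradient $\nabla U_\beta(0)$ is fixed by a nontrivial rotation and must therefore vanish.

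Along the bisector ray I would set $\phi(r)=U_\beta(r,\pi/n)$ and compute, from the sum (\ref{rieszpotential1}),
\begin{equation*}
\phi'(r)=-2\beta\sum_{j=1}^{n}\frac{r-\cos\psi_j}{\bigl(1+r^2-2r\cos\psi_j\bigr)^{\beta+1}},\qquad \psi_j=\frac{(2j-1)\pi}{n}.
\end{equation*}
For the upper bound, note that $\max_j\cos\psi_j=\cos(\pi/n)$, attained at $\psi_1=\pi/n$ and its partner $\psi_n=2\pi-\pi/n$. As soon as $r\ge\cos(\pi/n)=r_u(n)$, every summand is non-negative and those with $\cos\psi_j<\cos(\pi/n)$ are strictly positive, so $\phi'(r)<0$; hence no critical point occurs for $r\ge r_u(n)$, and in particular $\phi'(r_u)<0$.

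For the lower bound I would instead differentiate the integral representation on the bisector, factoring $\partial_r$ of the integrand as a strictly positive prefactor times the polynomial
\begin{equation*}
P(r,t)=\beta r^2 t\bigl(1-(rt)^{2n}\bigr)-n(rt)^n\bigl(1-r^2t\bigr).
\end{equation*}
Writing $s=rt\in(0,r)$ and estimating $\beta r(1-s^{2n})>\beta r(1-r^{2n})$ against $n s^{n-1}(1-rs)<n r^{n-1}$ reduces the pointwise positivity of $P$ to the single inequality $n r^{n-2}+\beta r^{2n}\le\beta$. Since $r^{2n}\le r^{n-2}$ for $0<r<1$ and $n\ge 3$, this holds whenever $n r^{n-2}+\beta r^{n-2}\le\beta$, i.e.\ whenever $r\le r_l(n,\beta)=\bigl(\beta/(\beta+n)\bigr)^{1/(n-2)}$. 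Thus $\phi'(r)>0$ on $(0,r_l]$, so no critical point lies there and $\phi'(r_l)>0$. Combining this with $\phi'(r_u)<0$ and the continuity of $\phi'$ on $[r_l,r_u]$, the intermediate value theorem yields a zero of $\phi'$ in $(r_l,r_u)$, which simultaneously gives the existence of a non-trivial critical point on each bisector and the asserted two-sided bound.

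The main obstacle is pinning down the clean constant $r_l(n,\beta)$: the exact sign-change locus of $P(r,t)$ is transcendental, so the difficulty is to pick estimates loose enough to produce an elementary closed form yet tight enough to remain a genuine lower bound. The replacement of $r^{2n}$ by $r^{n-2}$ is the decisive step that trades a little sharpness for the explicit threshold $\bigl(\beta/(\beta+n)\bigr)^{1/(n-2)}$. A secondary point to verify is that $r_l(n,\beta)<r_u(n)$ for every $n\ge 3$ and $\beta\in(0,1)$, so that the interval $(r_l,r_u)$ is nonempty and the intermediate value argument applies; this is immediate from the fact that the located zero of $\phi'$ must lie strictly between the two one-sided regions already analyzed.
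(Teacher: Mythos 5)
Your proposal is correct and follows essentially the same route as the paper: critical angles from the $\theta$-derivative of the integral representation (\ref{intrep}), elimination of the vertex rays, the lower bound $r_l(n,\beta)$ from the pointwise sign of the integrand factor (your $P(r,t)$ is exactly $r^2t\,g_n(r,t)$ from the paper, estimated the same way via $r^{2n}\le r^{n-2}$), the upper bound $\cos(\pi/n)$ from the finite trigonometric sum for $u_\beta'$, and the intermediate value theorem to produce the critical point in $(r_l,r_u)$. The only cosmetic differences are your rotational-symmetry argument for criticality of the origin and your treatment of the vertex rays via monotonicity of the integrand, which should be upgraded to strict positivity of the $r$-derivative under the integral sign (as the paper does), since strict monotonicity alone does not preclude a vanishing derivative.
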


\begin{example}{($n=3$, $\beta=1/2$)}
Let $n=3$ and $\beta=1/2$. In this case, the charges are located at the vertices of an equilateral triangle inscribed in a unit circle centered at the origin. The Coulomb potential restricted to the bisectors  is $u_{1/2}(r)=2(r^2-r+1)^{-1/2}+(1+r)^{-1}$. It is easy to see that the critical points are in the interval $[0,1/2]$. Direct computation shows that the critical points are among the non-negative roots of the polynomial $r(r^5+5r^4+r^3+r^2-4r+1)$. Applying the Descarte's Rule of Signs to the polynomial $r^5+5r^4+r^3+r^2-4r+1$ we infer that there are two positive roots, one of which must be discarded as being outside of $[0,1/2]$. Thus there is a unique critical point for $u_{1/2}(r)$ different from the origin, and therefore the potential $U(r,\theta)$ has exactly four critical points. We remark that one of the two results in paper \cite{Tsai} dealt with the case of three point charges of arbitrary signs placed at the vertices of an equilateral triangle, but only an upper of 4 on the number of critical points was obtained.

\end{example}

\begin{example}{($n=4$, $\beta=1/2$)}
Let $n=4$ and $\beta=1/2$. Now the charges are placed at the vertices of a square inscribed in a unit circumference centered at the origin. For the Coulomb potential on the bisectors we obtain that $u_{1/2}(r)=2(r^2-\sqrt{2}r+1)^{-1/2}+2(r^2+\sqrt{2}r+1)^{-1/2}$. It follows that the critical points for $u_{1/2}(r)$ must be in the interval $[0,1/\sqrt{2}]$. It is not hard to see that the critical points are among the non-negative roots of the polynomial $r(4r^6+r^5-4r^3+1)$ and again applying the Descarte's Rule of Signs to the polynomial $4r^6+r^5-4r^3+1$ we conclude that it has two positive zeros one of which must be dropped as being outside of $[0,1/\sqrt{2}]$. Thus $u_{1/2}(r)$ has exactly two critical points, namely 0 and some $r_0\neq0$. This shows that $U(r,\theta)$ has exactly five critical points in the case of the square. 
\end{example}


\section{Asymptotic behavior of critical points}
It is not hard to see that the potential $U_\beta(r,\theta)$ has the same number of the critical points on each perpendicular bisector to the sides of a polygon. Indeed, as it follows
from ($\ref{intrep}$) and $(\ref{critang})$ the potential on the perpendicular bisectors is independent of $\theta$.
   
We also observe that the radial coordinate $r$ corresponding to an equilibrium position on each perpendicular bisector naturally depends on $n$ and $\beta$. We will show that when $\beta$ is fixed and the number  
of charges $n$ increases to infinity, $r(n,\beta)$ approaches $1$. That is, we have the following:
\begin{theorem}\label{unidistr}
Let $r(n,\beta)\in(0,1)$ be a radial coordinate of a non-trivial critical point on a perpendicular bisector to the sides. Then for a fixed $\beta\in(0,1)$ we have
\begin{equation}\label{limitvalueofr}
\lim_{n\rightarrow\infty}r(n,\beta)=1.
\end{equation}
\end{theorem}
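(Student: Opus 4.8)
The plan is to deduce the limit directly from the confinement already established in Theorem \ref{locatcritpts}, by a squeeze (sandwich) argument. That theorem guarantees that every non-trivial critical point on a perpendicular bisector has radial coordinate $r(n,\beta)$ trapped in the interval $(r_l(n,\beta),\, r_u(n))$. Consequently, to prove \eqref{limitvalueofr} it suffices to show that, for fixed $\beta\in(0,1)$, both endpoints of this interval converge to $1$ as $n\to\infty$; the claim then follows at once from the squeeze theorem.

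First I would dispatch the upper bound, which is immediate. Since $r_u(n)=\cos(\pi/n)$ and $\pi/n\to 0$, continuity of the cosine gives $r_u(n)\to\cos 0=1$.

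Next I would treat the lower bound, which needs only a short computation. Taking logarithms,
\[
\ln r_l(n,\beta)=\frac{1}{n-2}\,\ln\!\left(\frac{\beta}{\beta+n}\right)=\frac{\ln\beta-\ln(\beta+n)}{n-2}.
\]
For fixed $\beta$ the numerator grows in absolute value like $\ln n$, while the denominator grows like $n$; invoking the standard growth comparison $(\ln n)/n\to 0$ we obtain $\ln r_l(n,\beta)\to 0$, and hence $r_l(n,\beta)\to 1$. Combining this with the upper-bound limit and the inclusion $r(n,\beta)\in(r_l(n,\beta),\,r_u(n))$ yields $\lim_{n\to\infty}r(n,\beta)=1$.

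The main point is that there is no genuine obstacle here: the substantive work—localizing the critical point between two explicit bounds—was already carried out in Theorem \ref{locatcritpts}, so the present statement reduces to an elementary two-sided limit. The only step requiring even minor care is the lower-bound computation, and its sole ingredient is the asymptotic $(\ln n)/n\to 0$. I would therefore keep the argument short and let the sandwich do the work.
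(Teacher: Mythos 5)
Your proof is correct and follows exactly the paper's argument: squeeze $r(n,\beta)$ between $r_l(n,\beta)$ and $r_u(n)$ from Theorem \ref{locatcritpts} and check that both bounds tend to $1$. Your logarithmic computation for $r_l(n,\beta)\to 1$ simply fills in a limit the paper states without detail.
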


Theorem $\ref{unidistr}$ shows that the critical points (except the one located at the origin) cluster on
the unit circumference for large $n$. We also see from ($\ref{critang}$) that for each fixed $n\geq 3$ 
all the non-trivial critical points consist of $n$-tuple subsets equidistributed on circumference with radius
$r(n,\beta)$. We deduce that 
when $n$ becomes infinitely large all the non-trivial critical points become uniformly distributed on the unit circumference.

Now we investigate the dependence of $r(n,\beta)$ on the parameter $\beta$ when $n$ is fixed. We will consider the asymptotic behavior of critical points when $\beta\rightarrow0+$ and $\beta\rightarrow+\infty$. First we treat the case $\beta\rightarrow0+$.
\begin{theorem}\label{betasmall}
Let $n\geq3$ be fixed. Let $r(n,\beta)$ be a radius corresponding to an equilibrium position different from the origin. Then
\begin{equation}\label{deponbeta0}
\lim_{\beta\rightarrow0+}r(n,\beta)=0.
\end{equation}
\end{theorem}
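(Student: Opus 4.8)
The plan is to reduce everything to a single radial variable along a bisector and then read off the balance of forces from the integral representation (\ref{intrep}). On a perpendicular bisector we have $\theta=\pi k/n$ with $k$ odd, so $\cos(n\theta)=-1$; since $1+(rt)^{2n}+2(rt)^n=(1+(rt)^n)^2$ and $1-(rt)^{2n}=(1-(rt)^n)(1+(rt)^n)$, the kernel in (\ref{intrep}) collapses and the restricted potential becomes
\begin{equation*}
u_\beta(r)=\frac{n\sin(\pi\beta)}{\pi}\int_0^1 t^{\beta-1}(1-t)^{-\beta}(1-r^2t)^{-\beta}\,\frac{1-(rt)^n}{1+(rt)^n}\,dt .
\end{equation*}
A non-trivial critical point on the bisector is then a zero $r\in(0,1)$ of $u_\beta'(r)$.

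Next I would differentiate under the integral sign. The two factors that depend on $r$, namely $(1-r^2t)^{-\beta}$ and $(1-(rt)^n)/(1+(rt)^n)$, produce two contributions of opposite sign; after factoring out the positive quantity $2rn\sin(\pi\beta)/\pi$, the equation $u_\beta'(r)=0$ takes the form of a balance $A(r,\beta)=B(r,\beta)$, where
\begin{align*}
A(r,\beta)&=\beta\int_0^1 \frac{t^{\beta}(1-t)^{-\beta}}{(1-r^2t)^{1+\beta}}\,\frac{1-(rt)^n}{1+(rt)^n}\,dt,\\
B(r,\beta)&=nr^{n-2}\int_0^1 \frac{t^{n+\beta-1}(1-t)^{-\beta}}{(1-r^2t)^{\beta}\,(1+(rt)^n)^2}\,dt .
\end{align*}
The term $A$, coming from the outward growth of $(1-r^2t)^{-\beta}$, carries an explicit factor $\beta$, while the inward term $B$ carries the factor $r^{n-2}$. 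This is the conceptual heart of the matter: as $\beta\to0+$ the outward pull is $O(\beta)\to0$, so the balance can only be maintained by letting $r^{n-2}\to0$, i.e.\ $r\to0$ (here $n\ge3$ is essential, so that $n-2\ge1$).

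To make this rigorous I would argue by contradiction using the a priori confinement from Theorem \ref{locatcritpts}. Since every non-trivial critical point satisfies $r\le r_u(n)=\cos(\pi/n)<1$, we have $1-r^2t\ge\sin^2(\pi/n)>0$ uniformly in $t\in[0,1]$, which lets me bound $A(r,\beta)\le C(n)\,\beta$ for all such $r$, with $C(n)$ independent of $\beta$ (for $\beta$ bounded away from $1$); hence $A(r(n,\beta),\beta)\to0$ as $\beta\to0+$. Suppose $r(n,\beta)\not\to0$. Then there are $\delta\in(0,r_u)$ and $\beta_k\downarrow0$ with $r_k:=r(n,\beta_k)\ge\delta$, and using the crude bounds $(1-r^2t)^{-\beta}\ge1$, $(1-t)^{-\beta}\ge1$ and $1+(rt)^n\le2$ I can estimate $B(r_k,\beta_k)\ge \tfrac{n\delta^{n-2}}{4(n+\beta_k)}$, which stays bounded away from $0$. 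This contradicts $A(r_k,\beta_k)=B(r_k,\beta_k)$, and the theorem follows.

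I expect the main obstacle to be technical rather than conceptual: justifying the differentiation under the integral sign and obtaining the bounds on $A$ and $B$ uniformly in $\beta$, given that the weight $t^{\beta-1}(1-t)^{-\beta}$ is singular at both endpoints and, as $\beta\to0+$, concentrates all of its normalized mass at $t=0$. The saving feature is that the extra powers $t$ and $t^n$ appearing in $A$ and $B$ restore integrability near the origin, while the confinement $r\le\cos(\pi/n)$ keeps the factors $(1-r^2t)^{\pm\beta}$ under control. Once these estimates are in place, the $\beta$-versus-$r^{n-2}$ scaling does all the work.
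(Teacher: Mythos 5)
Your proposal is correct and follows essentially the same route as the paper: differentiate the restricted integral representation to obtain the balance between a term carrying the factor $\beta$ and a term carrying the factor $nr^{n-2}$, bound the second from below by $\tfrac{n r^{n-2}}{4(n+\beta)}$ exactly as the paper does, and bound the first by $C\beta$, forcing $r^{n-2}=\bigO{\beta}$. The only cosmetic differences are that you argue by contradiction along a subsequence where the paper instead extracts an explicit upper bound $r_u(n,\beta)=\{4\beta(n+\beta)B(1+\beta,1-2\beta)\}^{1/(n-2)}\to0$, and your estimate of the first integral uses the confinement $r\le\cos(\pi/n)$ rather than the paper's inequality $1-(rt)^n\le n(1-r^2t)$ (which obliges the paper to assume $\beta<1/2$).
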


We turn to the case $\beta\rightarrow+\infty$ now. We want to show that when $\beta$ increases, the distance from the origin to the critical points is restricted by a side of a regular polygon. In other words, an equilibrium position $r(n,\beta)$ can not exceed (the length of) an apothem of a regular polygon. In our case we have a regular polygon inscribed in a unit circumference, therefore its apothem is just $\cos(\pi/n)$.
\begin{theorem}\label{betalarge}
Assume $n\geq3$ is fixed and let $r(n,\beta)$ be a radius corresponding to an equilibrium position different from the origin. Under these assumptions we have
\begin{equation}\label{bigbetatheorem}
\limsup_{\beta\rightarrow+\infty}r(n,\beta)\leq\cos\left(\frac{\pi}{n}\right).
\end{equation}
\end{theorem}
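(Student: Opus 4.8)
The plan is to work directly with the restriction of $U_\beta$ to a perpendicular bisector rather than with the integral representation of Corollary~\ref{prop1}, since the latter is only available for $\beta\in(0,1)$ whereas here $\beta\to+\infty$. Fix the bisector $\theta=\pi/n$ and set $u_\beta(r)=U_\beta(r,\pi/n)$. From (\ref{rieszpotential1}) this is $u_\beta(r)=\sum_{j=1}^n g_j(r)^{-\beta}$ with $g_j(r)=1+r^2-2r\cos\bigl((2j-1)\pi/n\bigr)$. The two vertices nearest to the bisector sit at angular distance $\pi/n$ (the indices $j=1$ and $j=n$), so among the $g_j(r)$ the minimum is $g_{\min}(r)=1+r^2-2r\cos(\pi/n)$, attained exactly twice, and $g_{\min}(1)=2-2\cos(\pi/n)>0$, so there is no singularity as $r\uparrow 1$ along the bisector.

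Differentiating, the critical-point equation $u_\beta'(r)=0$ is equivalent to
\[
\sum_{j=1}^n g_j(r)^{-\beta-1}\bigl(r-\cos((2j-1)\pi/n)\bigr)=0.
\]
The heuristic is that for large $\beta$ this sum is governed by the terms with the smallest $g_j$, namely the two nearest charges, whose bracket equals $r-\cos(\pi/n)$; this vanishes precisely at the apothem and is positive beyond it, which should force $u_\beta'(r)<0$ for $r>\cos(\pi/n)$ once $\beta$ is large. To make this precise I would fix $\varepsilon>0$, divide the displayed equation by $g_{\min}(r)^{-\beta-1}$, and study
\[
S_\beta(r)=\sum_{j=1}^n\Bigl(\frac{g_{\min}(r)}{g_j(r)}\Bigr)^{\beta+1}\bigl(r-\cos((2j-1)\pi/n)\bigr)
\]
on the compact interval $[\cos(\pi/n)+\varepsilon,\,1]$.

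On this interval the two dominant terms contribute $2\bigl(r-\cos(\pi/n)\bigr)\ge 2\varepsilon$. For every other index $j$ one has $g_j(r)-g_{\min}(r)=2r\bigl(\cos(\pi/n)-\cos((2j-1)\pi/n)\bigr)$, which is bounded below by a positive constant uniformly in $r\in[\cos(\pi/n)+\varepsilon,1]$ (using $r\ge\cos(\pi/n)+\varepsilon$ together with $\cos((2j-1)\pi/n)\le\cos(3\pi/n)<\cos(\pi/n)$); hence the ratios $g_{\min}(r)/g_j(r)$ are bounded above by some $1-\delta<1$ uniformly, and the non-dominant terms are $O\bigl((1-\delta)^{\beta+1}\bigr)\to0$ uniformly. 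Consequently $S_\beta(r)\ge 2\varepsilon-O\bigl((1-\delta)^{\beta+1}\bigr)>0$ for all such $r$ once $\beta$ is large, so $u_\beta'(r)<0$ there and no non-trivial critical point can lie in $[\cos(\pi/n)+\varepsilon,1)$. Letting $\varepsilon\downarrow0$ yields $\limsup_{\beta\to+\infty}r(n,\beta)\le\cos(\pi/n)$.

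The main obstacle is the uniformity: because $r(n,\beta)$ moves with $\beta$, the pointwise domination argument must be upgraded to a uniform estimate over the compact interval, which is exactly what the uniform lower bound on $g_j-g_{\min}$ and the resulting uniform gap $1-\delta$ provide. A secondary point to check carefully is that no non-dominant bracket $r-\cos((2j-1)\pi/n)$, though possibly negative, can overturn the sign; this is guaranteed by the geometric decay of its coefficient against the fixed positive contribution $2\varepsilon$ of the nearest pair.
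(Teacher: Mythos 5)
Your proof is correct, but it takes a genuinely different and substantially heavier route than the paper's. The paper simply observes that the angles $(2j-1)\pi/n$, $j=1,\dots,n$, all lie in $[\pi/n,\,2\pi-\pi/n]$, so that $\cos\bigl((2j-1)\pi/n\bigr)\le\cos(\pi/n)$ for every $j$; consequently, for $r\in[\cos(\pi/n),1)$ \emph{every} bracket $r-\cos\bigl((2j-1)\pi/n\bigr)$ in
\[
u_{\beta}'(r)=-2\beta\sum_{j=1}^n\frac{r-\cos\bigl((2j-1)\pi/n\bigr)}{\bigl(1+r^2-2r\cos((2j-1)\pi/n)\bigr)^{\beta+1}}
\]
is nonnegative, and those with $j\neq1,n$ are strictly positive, so $u_\beta'(r)<0$ there for \emph{every} $\beta>0$ (this is exactly Lemma~\ref{rightinterv}, already established in the proof of Theorem~\ref{locatcritpts}). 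That yields the uniform bound $r(n,\beta)<\cos(\pi/n)$ for all $\beta>0$, from which the $\limsup$ inequality is immediate. In particular your secondary worry --- that some non-dominant bracket might be negative on $r\ge\cos(\pi/n)$ and overturn the sign --- never materializes, and the entire large-$\beta$ dominant-term analysis (dividing by $g_{\min}^{-\beta-1}$, the uniform gap $1-\delta$, the final $\varepsilon\downarrow0$ step) is unnecessary for the one-sided bound being proved. What your Laplace-type localization does buy is a tool that sees \emph{both} sides of the apothem: the same comparison on an interval $[\eta,\cos(\pi/n)-\varepsilon]$, where the dominant bracket is negative, would show $u_\beta'>0$ there for large $\beta$, and is the natural starting point for the paper's Conjecture that $r(n,\beta)\to\cos(\pi/n)$. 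So your method has more potential, but is overkill for Theorem~\ref{betalarge} itself, and it only gives the conclusion asymptotically in $\beta$ rather than for every $\beta>0$.
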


In fact, after considering numerous examples, one arrives at the following:

\begin{conjecture} Under the assumptions of Theorem $\ref{betalarge}$
\begin{equation*}
\lim_{\beta\rightarrow+\infty}r(n,\beta)=\cos\left(\frac{\pi}{n}\right).
\end{equation*}
\end{conjecture}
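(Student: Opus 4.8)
The plan is to combine Theorem~\ref{betalarge}, which already gives the upper bound $\limsup_{\beta\to\infty}r(n,\beta)\le\cos(\pi/n)$, with a matching lower bound $\liminf_{\beta\to\infty}r(n,\beta)\ge\cos(\pi/n)$. Throughout I work with the restriction of the potential to a perpendicular bisector, which by (\ref{critang}) is the one-variable function $u_\beta(r)=\sum_{j=1}^n g_j(r)^{-\beta}$, valid for every $\beta>0$, where $g_j(r)=1+r^2-2rc_j$ is the squared distance to the $j$-th charge and $c_j=\cos((2j-1)\pi/n)$. Writing $g_j(r)=(r-c_j)^2+\sin^2((2j-1)\pi/n)$ shows that for every $r\in(0,1)$ the smallest squared distance is attained exactly at the two nearest vertices, for which $c_j=\cos(\pi/n)$, and that the corresponding value $g_{\min}(r)=(r-\cos(\pi/n))^2+\sin^2(\pi/n)$ is itself minimized precisely at $r=\cos(\pi/n)$. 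This is the geometric mechanism driving the critical point to the apothem as $\beta\to\infty$.

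Differentiating, a non-trivial critical point satisfies $\sum_{j=1}^n g_j(r)^{-\beta-1}(c_j-r)=0$, that is,
\begin{equation*}
r=\bar c_\beta(r):=\frac{\sum_{j=1}^n c_j\,g_j(r)^{-\beta-1}}{\sum_{j=1}^n g_j(r)^{-\beta-1}}.
\end{equation*}
Thus $r$ is a weighted average of the numbers $c_j$ with Gibbs-type weights $g_j(r)^{-\beta-1}$; since every $c_j\le\cos(\pi/n)$ this reproves $r<\cos(\pi/n)$. For the lower bound I would show that for each $\epsilon>0$ and all large $\beta$ the sign function $\phi_\beta(r):=\sum_{j=1}^n g_j(r)^{-\beta-1}(c_j-r)$ is strictly positive on $(0,\cos(\pi/n)-\epsilon]$. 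Since at $r=\cos(\pi/n)$ the two nearest terms vanish and all others are negative, $\phi_\beta(\cos(\pi/n))<0$; consequently every non-trivial critical point, and hence every $r(n,\beta)$, would lie in $(\cos(\pi/n)-\epsilon,\cos(\pi/n))$, giving $\liminf_{\beta\to\infty}r(n,\beta)\ge\cos(\pi/n)$. On any interval $[\delta,\cos(\pi/n)-\epsilon]$ with $\delta>0$ fixed this is routine: the ratio $g_{\min}(r)/g_{\mathrm{2nd}}(r)$ of the smallest to the second-smallest of the $g_j(r)$ is bounded above by some $\kappa<1$ uniformly, so the two nearest weights dominate, $\bar c_\beta(r)\to\cos(\pi/n)>r$, and therefore $\phi_\beta(r)>0$.

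The genuine difficulty is the regime $r\to0$, where all $g_j(r)\to1$, the weights de-concentrate, and the crude domination fails; this is exactly what must be excluded to prevent a spurious critical point from sliding into the origin. I would resolve it through the rescaling $r=s/\beta$ together with the tilted mean
\begin{equation*}
m(s):=\frac{\sum_{j=1}^n c_j\,e^{2sc_j}}{\sum_{j=1}^n e^{2sc_j}}.
\end{equation*}
Since $g_j(r)^{-\beta-1}=\exp\big((\beta+1)(2rc_j+\bigO{r^2})\big)$, setting $r=s/\beta$ gives $\bar c_\beta(s/\beta)\to m(s)$ as $\beta\to\infty$. The elementary point is that $m$ is the mean of the $c_j$ under a Gibbs measure, so $m(s)=\tfrac12(\log\sum_j e^{2sc_j})'$ and $m'(s)=2\,\mathrm{Var}_s(c)\ge0$; as the $c_j$ sum to zero one has $m(0)=0$, hence $m(s)>0$ for all $s>0$, with $m'(0)=2\,\mathrm{Var}_0(c)=1$ and $m(s)\to\cos(\pi/n)$ as $s\to\infty$. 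Thus in this scaling $\bar c_\beta(r)-r\to m(s)>0$, so $\phi_\beta>0$ when $r$ is of order $1/\beta$; and once $\beta r\to\infty$ the ratio $(g_{\mathrm{2nd}}(r)/g_{\min}(r))^{\beta+1}\to\infty$, forcing $\bar c_\beta(r)\to\cos(\pi/n)$, which again beats $r\le\delta<\cos(\pi/n)$. Matching these estimates across the crossover $\beta r\sim1$ closes the remaining interval $(0,\delta]$.

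Assembling the three ranges yields $\phi_\beta>0$ on all of $(0,\cos(\pi/n)-\epsilon]$, which gives the lower bound; together with Theorem~\ref{betalarge} this proves the stated limit. The main obstacle is the uniformity of the small-$r$ estimate: the concentration of the Gibbs weights is lost precisely at the origin, and one must quantify how fast the tilted mean $m(\beta r)$ grows in order to beat the linear term $r$ uniformly across the crossover region $\beta r\sim1$. I expect that making the convergence $\bar c_\beta(s/\beta)\to m(s)$ uniform on compact $s$-intervals, and patching it to the concentration estimate valid for $\beta r\to\infty$, is the only delicate step; everything else reduces to the monotonicity of the single explicit family $m(s)$. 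As a by-product the argument identifies, for large $\beta$, the non-trivial critical point as the unique interior maximizer of $u_\beta$ along the bisector.
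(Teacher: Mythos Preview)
The statement you are addressing is labeled a \emph{Conjecture} in the paper, and the paper offers no proof of it; only the weaker Theorem~\ref{betalarge} (the $\limsup$ bound) is established. So there is no argument in the paper to compare your proposal against---you are attempting to prove what the authors leave open.

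On its merits, your strategy is sound and well aimed. Writing a non-trivial critical point as the fixed point $r=\bar c_\beta(r)$ of a Gibbs average of the $c_j=\cos((2j-1)\pi/n)$ is the right reformulation; the identification of $c_1=c_n=\cos(\pi/n)$ as the unique minimizers of $g_j(r)$ for every $r\in(0,1)$ is correct, and the concentration argument on a compact interval $[\delta,\cos(\pi/n)-\epsilon]$ bounded away from both endpoints is routine, exactly as you say. The rescaling $r=s/\beta$ near the origin is also the natural move, and your computation $m(0)=0$, $m'(0)=2\operatorname{Var}_0(c)=1$ is correct (since $\sum_j c_j=0$ and $\sum_j c_j^2=n/2$), so $m(s)>0$ for all $s>0$.

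You identify the genuine gap yourself: uniformity across the crossover $\beta r\sim 1$. This is a real issue, not a formality, but it can be closed with the ingredients you already have. Concretely, one can split $(0,\delta]$ into $\{0<\beta r\le M\}$ and $\{\beta r\ge M\}$. On the first piece, a second-order expansion gives $g_j(r)^{-\beta-1}=e^{2sc_j}\bigl(1+\bigO{s/\beta}\bigr)$ uniformly for $s\in[0,M]$, hence $\bar c_\beta(r)=m(s)+\bigO{s/\beta}$; since $m(s)\ge s/2$ for small $s$ and $m(s)\ge m(s_0)>0$ on $[s_0,M]$, one obtains $\bar c_\beta(r)-r\ge c\,s-\bigO{s/\beta}>0$ for $\beta$ large. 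On the second piece, the ratio $g_k(r)/g_1(r)=1+2r(c_1-c_k)+\bigO{r^2}$ gives $(g_k/g_1)^{\beta+1}\ge \exp\bigl(\beta r\cdot 2(c_1-c_k)\bigr)\to\infty$ uniformly once $\beta r\ge M$ with $M$ large, so $\bar c_\beta(r)\to\cos(\pi/n)>r$ uniformly on $[M/\beta,\delta]$. Patching the two estimates, with $M$ fixed large and then $\beta$ large, yields $\phi_\beta>0$ on all of $(0,\cos(\pi/n)-\epsilon]$, completing the lower bound.

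Two minor remarks. First, your closing sentence about uniqueness of the non-trivial critical point does not follow from the sign argument alone; showing $\phi_\beta>0$ on $(0,\cos(\pi/n)-\epsilon]$ and $\phi_\beta(\cos(\pi/n))<0$ forces every non-trivial critical point into $(\cos(\pi/n)-\epsilon,\cos(\pi/n))$, but uniqueness would require, e.g., monotonicity of $\phi_\beta$ on that short interval. Second, the paper's lower bound $r_l(n,\beta)=(\beta/(\beta+n))^{1/(n-2)}$ from Theorem~\ref{locatcritpts} is derived via the integral representation valid only for $\beta\in(0,1)$, so it cannot be invoked for $\beta\to\infty$; your independent treatment of the small-$r$ regime is therefore necessary and not merely convenient.
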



\section{Number of critical points of the Riesz potential $U_\beta(r,\theta)$ in the neighborhood of $\beta=1$}

In the case $\beta=1$ the Riesz potential ($\ref{rieszpotential1}$) takes the form
\begin{equation}\label{limcase}
U_1(r,\theta)=\sum_{j=1}^n\frac{1}{1+r^2-2r\cos(2\pi j/n-\theta)}.
\end{equation}
What is rather surprising is that the potential $U_1(r,\theta)$ admits the following simple closed expression:
\begin{proposition}\label{simplerep}
For $r\in[0,1)$ and $0\leq\theta<2\pi$
\begin{equation}\label{finsum}
U_1(r,\theta)=\frac{n}{1-r^2}\frac{1-r^{2n}}{1+r^{2n}-2r^n\cos(n\theta)}.
\end{equation}
\end{proposition}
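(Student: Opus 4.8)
The plan is to expand each summand as a Poisson kernel and then exploit the orthogonality of the $n$-th roots of unity. For $r\in[0,1)$ one has the factorization $1+r^2-2r\cos\phi=(1-re^{i\phi})(1-re^{-i\phi})$, from which the geometric series gives the absolutely convergent expansion
$$\frac{1}{1+r^2-2r\cos\phi}=\frac{1}{1-r^2}\sum_{k=-\infty}^{\infty}r^{|k|}e^{ik\phi}.$$
I would apply this with $\phi=2\pi j/n-\theta$ and sum over $j$, so that the single summand $r^{|k|}e^{ik(2\pi j/n-\theta)}$ factors as $r^{|k|}e^{-ik\theta}e^{2\pi i kj/n}$.

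Interchanging the (absolutely convergent) sums over $j$ and $k$ then yields
$$U_1(r,\theta)=\frac{1}{1-r^2}\sum_{k=-\infty}^{\infty}r^{|k|}e^{-ik\theta}\sum_{j=1}^n e^{2\pi i kj/n}.$$
The key step is the root-of-unity identity $\sum_{j=1}^n e^{2\pi i kj/n}=n$ when $n\mid k$ and $0$ otherwise. This collapses the sum over $k$ to the multiples $k=mn$, $m\in\mathbb{Z}$, each contributing a factor $n$, leaving
$$U_1(r,\theta)=\frac{n}{1-r^2}\sum_{m=-\infty}^{\infty}(r^n)^{|m|}e^{im(n\theta)},$$
where I have used symmetry of the index to replace $e^{-imn\theta}$ by $e^{im(n\theta)}$.

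Finally I would recognize the remaining series as the same Poisson-kernel sum evaluated at radius $r^n$ and angle $n\theta$; running the first identity backwards gives $\sum_{m}(r^n)^{|m|}e^{im(n\theta)}=(1-r^{2n})/(1+r^{2n}-2r^n\cos(n\theta))$, and substituting produces the claimed closed form. I expect no serious obstacle: the only point requiring care is the justification of the interchange of summations, which is immediate from absolute and (on $r\le r_0<1$) uniform convergence. I would also remark that, formally, the statement is the $q=0$ instance of \eqref{stat2}; since Proposition \ref{bigbangtheorem} is stated only for a positive integer $q$, the self-contained computation above is preferable to reducing to that case.
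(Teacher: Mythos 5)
Your proof is correct, but it takes a different route from the paper. The paper proves Proposition \ref{simplerep} by a purely algebraic, finite computation: starting from the partial fraction decomposition $\frac{1}{1-z^n}=\frac{1}{n}\sum_{j=1}^n\frac{1}{1-z\zeta_n^j}$, substituting $z\mapsto 1/z$, taking real parts of the two resulting identities, and subtracting them to produce $\sum_j (1-r^2)/(1+r^2-2r\cos(2\pi j/n+\theta))$ in closed form. No infinite series or convergence questions arise. Your argument instead expands each summand as the Poisson kernel $\frac{1}{1-r^2}\sum_k r^{|k|}e^{ik\phi}$ and uses the root-of-unity filter $\sum_{j=1}^n e^{2\pi ikj/n}=n\cdot[n\mid k]$ to collapse to multiples of $n$, then resums. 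Both steps are valid (the interchange is harmless since one sum is finite and the other absolutely convergent, so your worry there is easily dispatched), and the final resummation at radius $r^n$ and angle $n\theta$ gives exactly \eqref{finsum}. What your approach buys is conceptual unity: it is precisely the $\beta=1$ specialization of the paper's proof of Corollary \ref{prop1}, where the general Fourier coefficients $b_m$ degenerate to $r^{|m|}/(1-r^2)$ and the integral representation becomes a closed form; this makes transparent why \eqref{finsum} is the $q=0$ case of \eqref{stat2}, as you note. What the paper's approach buys is that it is entirely finite and elementary, needing only the factorization of $1-z^n$. Either proof is acceptable; yours is a legitimate alternative.
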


Now it is not hard to count the number of critical points of the potential $U_1(r,\theta)$.

\begin{theorem}\label{critptsbeta1}
The potential $U_1(r,\theta)$ has exactly $n+1$ critical points. Namely, it has exactly one critical point on each perpendicular bisector to the sides of the polygon, and a critical point at the origin. 
\end{theorem}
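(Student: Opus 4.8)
The plan is to work directly from the closed form of Proposition \ref{simplerep} and locate the critical points by treating the equations $\partial_\theta U_1=0$ and $\partial_r U_1=0$ in turn. Writing $D(r,\theta)=1+r^{2n}-2r^n\cos(n\theta)$, the only $\theta$-dependence of $U_1$ sits in $D$, and a short computation gives $\partial_\theta U_1=-\frac{n(1-r^{2n})}{(1-r^2)D^2}\,2n r^n\sin(n\theta)$. For $r\in(0,1)$ the prefactor is strictly positive, so $\partial_\theta U_1=0$ is equivalent to $\sin(n\theta)=0$, i.e. $\theta=k\pi/n$ with $k=0,1,\dots,2n-1$. This splits into the $n$ perpendicular bisectors (odd $k$, where $\cos(n\theta)=-1$ and $D=(1+r^n)^2$) and the $n$ directions through the vertices (even $k$, where $\cos(n\theta)=1$ and $D=(1-r^n)^2$). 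Thus every non-trivial critical point lies on one of these $2n$ rays, and on each ray the condition reduces to the vanishing of the radial derivative of the one-variable restriction of $U_1$.

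On a vertex ray the restriction is $v(r)=\frac{n(1+r^n)}{(1-r^2)(1-r^n)}$. I would write this as a constant times the product $\frac{1}{1-r^2}\cdot\frac{1+r^n}{1-r^n}$ and observe that both factors are positive and strictly increasing on $(0,1)$; hence $v$ is strictly increasing, $v'$ never vanishes, and the vertex rays carry no critical point. On a bisector the restriction simplifies (after cancelling $1-r^n$) to $u(r)=\frac{n(1-r^n)}{(1-r^2)(1+r^n)}$. Computing the logarithmic derivative and clearing the positive denominators, the equation $u'(r)=0$ reduces on $(0,1)$ to the polynomial equation $\sum_{j=0}^{n-1}r^{2j}=n\,r^{n-2}$; equivalently $\Psi(r):=\sum_{j=0}^{n-1}r^{2j}-n\,r^{n-2}=0$.

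The heart of the argument, and the step I expect to be the main obstacle, is showing that $\Psi$ has exactly one root in $(0,1)$. Note that $\Psi(1)=0$ always holds, but $r=1$ is a boundary point that must be excluded, and one must rule out a second interior root. I would record $\Psi(0)=1>0$ and $\Psi'(1)=n>0$, so $\Psi$ is negative just to the left of $1$; the intermediate value theorem then yields at least one root in $(0,1)$, while $\Psi'(1)=n\neq 0$ shows $r=1$ is a simple root. To cap the count I would invoke Descartes' rule of signs: listing the nonzero coefficients of $\Psi$ in order of increasing power, the term $-n\,r^{n-2}$ produces exactly one $+,-$ transition and one $-,+$ transition, whether $n-2$ is even (so it merges with the $r^{n-2}$ term to give coefficient $1-n<0$) or odd (so it sits as a lone negative coefficient between the positive even-power terms $r^{n-3}$ and $r^{n-1}$). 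In either parity there are exactly two sign changes, so $\Psi$ has at most two positive roots counted with multiplicity; since $r=1$ accounts for one simple root and $(0,1)$ already contains one, the total of two is exhausted, giving precisely one root in $(0,1)$ and none beyond $1$. Finally, the origin is a critical point because the $C_n$-symmetry of the configuration with $n\geq 3$ forces $\nabla U_1(0)=0$ (equivalently, in the closed form the angular dependence enters only through $r^n\cos(n\theta)$ and $r^{2n}$, which are of order $\geq 3$ at the origin). Tallying one critical point on each of the $n$ bisectors together with the origin yields exactly $n+1$ critical points.
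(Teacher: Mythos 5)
Your proposal is correct, and its overall architecture coincides with the paper's: both use the closed form of Proposition \ref{simplerep}, reduce $\partial_\theta U_1=0$ to $\sin(n\theta)=0$, dispose of the vertex rays by monotonicity, and count the bisector critical points by applying Descartes' rule to a polynomial. The one genuine difference is \emph{which} polynomial. The paper works with $p_n(r)=1-nr^{n-2}+nr^n-r^{2n}$, whose sign pattern $+,-,+,-$ gives three sign changes; since $r=1$ is a double root of $p_n$, Descartes forces exactly three positive roots, and the paper must then prove a separate lemma ($p_n(r)<0$ for $r>1$) to push the third root into $(0,1)$. You instead cancel the factor $1-r^2$ first, obtaining $\Psi(r)=\sum_{j=0}^{n-1}r^{2j}-nr^{n-2}$ with $p_n=(1-r^2)\Psi$ (your reduction of $u'=0$ to $\Psi=0$ checks out, as does $\Psi'(1)=n(n-1)-n(n-2)=n$). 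This drops the sign-change count to two in both parities of $n$ and makes $r=1$ a \emph{simple} root, so the bound of two positive roots is exhausted by the simple root at $1$ and the interior root produced by the intermediate value theorem; uniqueness and simplicity of the interior root, and the absence of roots beyond $1$, all follow at once, with no analogue of the paper's Lemma \ref{interlemma} needed. Your symmetry argument for the origin is also fine (the paper gets the same fact from the explicit factor of $r$ in $v'(r)$). In short: same route, but your preliminary cancellation makes the root-counting step self-contained and eliminates an auxiliary lemma.
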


Next, we observe that the restriction of the potential $U_\beta(r,\theta)$ to a perpendicular bisector to the sides is an analytic function in $\beta,r$. This fact, coupled with the theory of stable mappings shows that the property of the potential $U_\beta(r,\theta)$ 
to have exactly $n+1$ critical points is preserved when $\beta$ varies in a small neighborhood of $\beta=1$.
\begin{theorem}\label{extres}
There exist real numbers $\beta_l$ and $\beta_r$ such that $1\in(\beta_l,\beta_r)$ so that for each $\beta\in(\beta_l,\beta_r)$ the potential $U_\beta(r,\theta)$ has $n+1$ critical points. In particular, for each $\beta\in(\beta_l,\beta_r)$ the potential $U_\beta(r,\theta)$ has only one critical point different from the origin on each perpendicular bisector to the sides of the polygon, and a critical point at the origin.
\end{theorem}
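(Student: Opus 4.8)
The plan is to reduce the two-dimensional counting problem to a one-variable problem on a single bisector, show that at $\beta=1$ the restricted potential is a Morse function whose only non-trivial critical point is non-degenerate, and then propagate this count to a neighborhood of $\beta=1$ using analyticity and the stability of Morse configurations. First I would set up the reduction. By the $\theta$-dependence of the integral representation the condition $\partial_\theta U_\beta=0$ is equivalent to $\sin(n\theta)=0$, i.e. to $\theta$ being a vertex ray or a bisector ray, and this holds for all $\beta>0$, not just $\beta\in(0,1)$. On the vertex rays the radial derivative does not vanish in $(0,1)$ (at $\beta=1$ the explicit expression is strictly increasing to $+\infty$, and this persists near $\beta=1$ by continuity), so no non-trivial critical points occur there. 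Fixing a bisector direction $\theta_0$ and writing $u_\beta(r)=U_\beta(r,\theta_0)$, the criticality condition on the bisector reduces to $u_\beta'(r)=0$, and the total number of critical points of $U_\beta$ equals $1+n\,N(\beta)$, where $N(\beta)=\#\{r\in(0,1):u_\beta'(r)=0\}$ and the $1$ accounts for the origin. Thus Theorem \ref{extres} is equivalent to $N(\beta)=1$ for $\beta$ near $1$. Here $u_\beta(r)$ is analytic in $(\beta,r)$, and since $\cos(n\theta_0)=-1$ on a bisector the charges stay at positive distance as $r\to1$, so $u_\beta$ extends analytically across $r=1$.

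Next I would establish the Morse property at $\beta=1$. Specializing Proposition \ref{simplerep} to a bisector ($\cos(n\theta_0)=-1$) collapses the denominator to $(1+r^n)^2$ and yields the explicit rational function $u_1(r)=\dfrac{n(1-r^n)}{(1-r^2)(1+r^n)}$. Writing $u_1'(r)=nP(r)/\bigl[(1-r^2)(1+r^n)\bigr]^2$ with $P$ an explicit polynomial, the proof of Theorem \ref{critptsbeta1} already shows that $P$ has exactly one root $r_1\in(0,1)$. I would then verify that this root is \emph{simple}, $P'(r_1)\neq0$, e.g. by a Descartes/Sturm count or by checking that $P$ and $P'$ have no common root in $(0,1)$; since $u_1''(r_1)=nP'(r_1)/\bigl[(1-r_1^2)(1+r_1^n)\bigr]^2$, simplicity is exactly non-degeneracy $u_1''(r_1)\neq0$. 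Together with the non-degenerate critical point at the origin (where $u_1''(0)=2n\neq0$), this makes $u_1$ a Morse function on $[0,1)$.

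I would then run the perturbation argument. Because $u_\beta'$ is analytic in $(\beta,r)$ with $u_1'(r_1)=0$ and $\partial_r u_1'(r_1)=u_1''(r_1)\neq0$, the implicit function theorem produces $\beta_l<1<\beta_r$ and a unique analytic branch $r(\beta)\to r_1$ solving $u_\beta'(r(\beta))=0$, which is moreover the only zero of $u_\beta'$ in a fixed neighborhood of $r_1$. To upgrade this to $N(\beta)=1$ I would confine all zeros of $u_\beta'$ to a compact subinterval $[a,b]\subset(0,1)$ uniformly in $\beta$ near $1$: the upper bound $r<r_u(n)=\cos(\pi/n)<1$ and the lower bound $r>r_l(n,\beta)$ of Theorem \ref{locatcritpts}, with $r_l(n,\beta)\to\bigl(1/(1+n)\bigr)^{1/(n-2)}>0$, give this for $\beta\le1$, while the analytic extension of $u_\beta$ past $r=1$ together with continuity in $\beta$ rules out zeros escaping toward $r=1$ for $\beta$ slightly above $1$. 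On $[a,b]\setminus(r_1-\varepsilon,r_1+\varepsilon)$ the continuous function $u_1'$ is bounded away from $0$, so by uniform continuity of $u_\beta'$ in $\beta$ it has no zero there once $\beta$ is close enough to $1$. Combining, $u_\beta'$ has exactly the one zero $r(\beta)$ in $(0,1)$, so $N(\beta)=1$ and $U_\beta$ has exactly $n+1$ critical points.

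The main obstacle is the non-degeneracy $u_1''(r_1)\neq0$: without an explicit value of $r_1$ one must deduce simplicity of the root purely from the structure of the polynomial $P$, and it is here that the closed form of Proposition \ref{simplerep} is indispensable. The secondary difficulty is the two-sided confinement near $\beta=1$, since Theorem \ref{locatcritpts} is stated only for $\beta\in(0,1)$; carrying the bound past $\beta=1$ requires exploiting that, on a bisector, $u_\beta$ stays finite and analytic up to $r=1$, and controlling this boundary behavior uniformly in $\beta$ is the step demanding the most care.
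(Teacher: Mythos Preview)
Your proposal is correct and shares the same skeleton as the paper's proof: reduce to the one-variable potential $u_\beta(r)$ on a bisector, verify that at $\beta=1$ the unique non-trivial critical point is non-degenerate (this is exactly the paper's Lemma~\ref{morse}, which follows because Descartes' rule forces the root of $p_n$ in $(0,1)$ to be \emph{simple}, the other two positive roots being the double root at $r=1$), apply the real-analytic implicit function theorem at $(\beta_0,r_0)=(1,r_0)$, and then propagate the count.

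The genuine methodological difference lies in the propagation step. The paper confines $r$ to the compact interval $[0,\cos(\pi/n)]$, notes that $u_\beta$ is Morse there for $\beta$ in a small interval around $1$, and then invokes the theory of stable mappings: Morse functions with distinct critical values are stable in the Whitney $C^\infty$ topology, so a continuous path of such functions consists of mutually isotopic maps, and isotopic maps have the same number of critical points. Your argument is more elementary and avoids this machinery entirely: you bound $u_1'$ away from zero on the compact set $[a,b]\setminus(r_1-\varepsilon,r_1+\varepsilon)$ and use uniform continuity in $\beta$ to forbid new zeros there, while the implicit function theorem handles the neighborhood of $r_1$. Both arguments are valid; yours is self-contained, while the paper's places the result in the conceptual framework of stability of Morse functions.

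One minor simplification: you worry about extending the confinement bounds past $\beta=1$ and about analytic continuation across $r=1$, but this is unnecessary. The upper bound $r_u(n)=\cos(\pi/n)$ comes from the finite-sum expression~(\ref{trig2}), which is valid for \emph{all} $\beta>0$; the same is true of the lower bound, since the estimate $g_n(r,t)\ge\beta-(\beta+n)r^{n-2}$ in the proof of Theorem~\ref{locatcritpts} uses only $\beta>0$ (the restriction $\beta\in(0,1)$ in the statement is for the integral representation, not the estimate itself). So the two-sided confinement to $[r_l(n,\beta),\cos(\pi/n)]$ holds uniformly for $\beta$ in a full neighborhood of $1$, and the boundary analysis near $r=1$ can be dropped.
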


In fact, as the next example clearly demonstrates, Maxwell's conjecture for the Riesz potentials holds for three unit positive point charges placed at the vertices of an equilateral triangle {\it for all} $\beta\in (0,1)$.
\begin{example}{($n=3$, $\beta\in(0,1)$)}\label{ex3}
Let $n=3$ and $\beta\in(0,1)$. Then the Riesz potential $U_\beta(r,\theta)$ has exactly four critical points. Namely, for all $\beta\in(0,1)$ there is one critical point on each perpendicular bisector to the sides of a triangle, and a critical point at the intersection of the perpendicular bisectors. The proof of this statement is provided at the Proofs section.
\end{example}

\section{Proofs}


\subsection{Proofs for Section 2}

{\bf Proof of Corollary $\ref{prop1}$.}
Proof of Corollary $\ref{prop1}$ can be found in \cite{FerrPorta} or \cite{BangElmab}, where it arises in connection with a polygonal many-body problem.
There is similarity between the ideas arising in the theory of many-body problem and electrostatics, in particular due to the fact that the potentials
which appear in both are identical. The idea behind Corollary $\ref{prop1}$ has its origin in the book of Tisserand \cite{Tisserand}
on celestial mechanics, and later it appears more or less explicitly in the paper of Lindow \cite{Lindow}. For the sake of self-containment we will reproduce the proof from the paper of Ferrario and Portaluri \cite{FerrPorta}.

First we observe that potential ($\ref{rieszpotential}$) can be rewritten in the following manner:
\begin{equation}\label{rieszpotential2}
U_{\beta}(r,\theta)=\sum_{j=1}^n|1-z\zeta_n^j|^{-2\beta},\quad n\geq3,\,\beta\in (0,1).
\end{equation}
This follows from a fact  that if $\zeta_n$ is the $n$-th primitive root of unity, we have that $\zeta_n^{-1}=1/\zeta_n$ is an $n$-th root of unity, and an observation $|1-z\zeta_n^j|=|z-\zeta_n^{-j}|$. In order to transform the right-hand side of ($\ref{rieszpotential2}$), we expand $|1-r\xi|^{-2\beta}$ in a double power series as follows:
\begin{lemma}
For each $r\in[0,1)$, $\xi=e^{i\theta}$, $0\leq\theta<2\pi$, and $\beta>0$, we have the expansion
\begin{equation*}
|1-r\xi|^{-2\beta}=\sum_{m=-\infty}^{+\infty}b_m\xi^m,
\end{equation*}
\noindent
where
\begin{equation*}
b_m=b_{-m}=\frac{\sin(\beta\pi)}{\pi}r^m\int_0^1(1-t)^{-\beta}t^{\beta-1}t^m(1-tr^2)^{-\beta}\,dt,\, m\geq 0.
\end{equation*}
\end{lemma}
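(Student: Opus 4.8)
The plan is to produce the expansion from the generalized binomial series and then identify the resulting coefficients with the claimed Beta-type integral. First I would write $|1-r\xi|^{-2\beta}=(1-r\xi)^{-\beta}(1-r\bar\xi)^{-\beta}$, which is legitimate with principal branches because $\mathrm{Re}(1-r\xi)\geq 1-r>0$ keeps both factors away from the branch cut for every $r\in[0,1)$. Since $|r\xi|=r<1$, each factor admits an absolutely convergent binomial expansion
$$(1-r\xi)^{-\beta}=\sum_{k=0}^\infty\frac{(\beta)_k}{k!}r^k\xi^k,\qquad (1-r\bar\xi)^{-\beta}=\sum_{l=0}^\infty\frac{(\beta)_l}{l!}r^l\xi^{-l},$$
where $(\beta)_k=\Gamma(\beta+k)/\Gamma(\beta)$ denotes the Pochhammer symbol.

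Next I would multiply these two series. Absolute convergence (for $r<1$) justifies rearranging the product and collecting equal powers of $\xi=e^{i\theta}$; grouping the terms carrying $\xi^m$ with $m\geq 0$ yields the Fourier expansion $|1-r\xi|^{-2\beta}=\sum_{m=-\infty}^{+\infty}b_m\xi^m$ with
$$b_m=r^m\sum_{l=0}^\infty\frac{(\beta)_l\,(\beta)_{m+l}}{l!\,(m+l)!}\,r^{2l}.$$
The symmetry $b_{-m}=b_m$ is then immediate, either from this formula or from the fact that $|1-r\xi|^{-2\beta}$ is real and even in $\theta$.

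It remains to recognize this power series as the stated integral. I would expand the factor $(1-tr^2)^{-\beta}=\sum_{l}\frac{(\beta)_l}{l!}(tr^2)^l$ inside the integral, integrate term by term, and evaluate each resulting integral by the Beta function,
$$\int_0^1(1-t)^{-\beta}t^{\beta+m+l-1}\,dt=\frac{\Gamma(\beta+m+l)\,\Gamma(1-\beta)}{\Gamma(m+l+1)}.$$
Multiplying by the prefactor $\sin(\beta\pi)/\pi$ and using the reflection formula $\Gamma(\beta)\Gamma(1-\beta)=\pi/\sin(\beta\pi)$ collapses the gamma factors exactly into $\frac{(\beta)_l(\beta)_{m+l}}{l!\,(m+l)!}$, reproducing the series for $b_m$ term by term. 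Equivalently, one may observe that $b_m=r^m\frac{(\beta)_m}{m!}\,{}_2F_1(\beta,\beta+m;m+1;r^2)$ and invoke Euler's integral representation of the Gauss hypergeometric function directly.

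The main obstacle is analytic bookkeeping rather than any deep idea: one must justify the rearrangement of the double series and the interchange of summation and integration, and keep track of the convergence constraints. The binomial expansions require $r<1$, while the Beta integral converges only when $1-\beta>0$, so the integral formula is genuinely valid for $\beta\in(0,1)$ (precisely the range relevant to the Riesz potential), even though the coefficient series itself persists for all $\beta>0$. Both interchanges are covered by absolute and locally uniform convergence on $[0,1-\varepsilon]$ together with the integrability of $(1-t)^{-\beta}t^{\beta-1}$ near the endpoints, so no subtlety beyond standard dominated convergence is required.
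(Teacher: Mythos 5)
Your proof is correct and follows essentially the same route as the paper's: factor $|1-r\xi|^{-2\beta}=(1-r\xi)^{-\beta}(1-r\bar\xi)^{-\beta}$, expand both factors binomially, collect the Fourier coefficients, and identify them with the Beta-type integral --- the paper substitutes the integral representation of $\binom{-\beta}{N}$ into the coefficient series and re-sums the inner index to reconstitute $(1-tr^2)^{-\beta}$, whereas you expand the integral and match coefficients term by term, which is the same computation read in the opposite direction. Your observation that the integral formula genuinely requires $\beta\in(0,1)$ rather than all $\beta>0$ is correct and applies equally to the paper's own argument, which uses $B(1-\beta,N+\beta)$ and hence implicitly assumes $\beta<1$.
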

\begin{proof}
We have
\begin{empheq}{align*}
|1-r\xi|^{-2\beta}&=\ \ (1-r\xi)^{-\beta}(1-r\xi^{-1})^{-\beta}\\
&=\left(\sum_{k=0}^{\infty}\left(\begin{array}{c} -\beta \\ k \end{array} \right) (-r\xi)^k\right)\cdot
\left(\sum_{h=0}^{\infty}\left(\begin{array}{c} -\beta \\ h \end{array} \right) (-r\xi^{-1})^h\right)\\
&=\sum_{h,k=0}^{\infty}\left(\begin{array}{c} -\beta \\ k \end{array} \right)\left(\begin{array}{c} -\beta \\ h \end{array} \right) (-r)^{k+h}\xi^{k-h}\\
&=\sum_{m=-\infty}^{\infty}\left((-1)^m\sum_{k-h=m;\ \ k,h\geq0}\left(\begin{array}{c} -\beta \\ k \end{array} \right)\left(\begin{array}{c} -\beta \\ h \end{array} \right) r^{k+h}\right)
\xi^m.
\end{empheq}
\noindent
Let
\begin{equation*}
b_m=(-1)^m\sum_{k-h=m;\ \ k,h\geq0}\left(\begin{array}{c} -\beta \\ k \end{array} \right)\left(\begin{array}{c} -\beta \\ h \end{array} \right) r^{k+h}.
\end{equation*}
\noindent
Next, recall that for each $\beta>0$ and $N\in\mathbb N$ we have
\begin{empheq}{align*}
\left(\begin{array}{c} -\beta \\ N \end{array} \right)&=(-1)^N\left(\begin{array}{c} N+\beta-1 \\ N \end{array} \right)\\
&=(-1)^N\frac{\Gamma(N+\beta)\Gamma(1-\beta)}{\Gamma(N+1)\Gamma(\beta)\Gamma(1-\beta)}\\
&=\frac{(-1)^N}{\Gamma(\beta)\Gamma(1-\beta)}\frac{\Gamma(N+\beta)\Gamma(1-\beta)}{\Gamma(N+1)}\\
&=\frac{(-1)^N\sin(\pi\beta)}{\pi}B(1-\beta,N+\beta),
\end{empheq}
\noindent
where $B(x,y)$ is the Beta function and $\Gamma(x,y)$ is the Gamma function. Here we have used the following identities \cite{AAR}:
\begin{empheq}{align*}
&\Gamma(\beta)\Gamma(1-\beta)=\frac{\pi}{\sin(\beta\pi)},\\
&\left(\begin{array}{c} -\beta \\ N \end{array} \right)=(-1)^N\frac{\Gamma(N+\beta)}{\Gamma(N+1)\Gamma(\beta)},\\
&\left(\begin{array}{c} \beta \\ N \end{array} \right)=\frac{\Gamma(1+\beta)}{\Gamma(N+1)\Gamma(\beta-N+1)}.
\end{empheq}
\noindent
Using the integral representation for the Beta function \cite[p. 4]{AAR} we obtain
\begin{equation*}
\left(\begin{array}{c} -\beta \\ N \end{array} \right)=(-1)^N\frac{\sin(\beta\pi)}{\pi}\int_0^1(1-t)^{-\beta}t^{\beta-1}t^Ndt.
\end{equation*}
\noindent
This implies that for $N=h+m$
\begin{empheq}{align*}
b_m& = (-1)^m\sum_{h=0}^{\infty}\left((-1)^{m+h}\frac{\sin(\beta\pi)}{\pi}\int_0^1(1-t)^{-\beta}t^{\beta-1}t^{m+h}dt
\left(\begin{array}{c} -\beta \\ h \end{array} \right) r^{m+2h}\right)\\
& =\frac{\sin(\beta\pi)}{\pi}r^m\sum_{h=0}^{\infty}\left((-1)^h\int_0^1(1-t)^{-\beta}t^{\beta-1}t^{m+h}dt \left(\begin{array}{c} -\beta \\ h \end{array} \right)
r^{2h}\right)\\
& =\frac{\sin(\beta\pi)}{\pi}r^m\int_0^1(1-t)^{-\beta}t^{\beta-1}t^m\left(\sum_{h=0}^{\infty}(-1)^h t^h \left(\begin{array}{c} -\beta \\ h \end{array} \right)
r^{2h}\right)dt\\
&= \frac{\sin(\beta\pi)}{\pi}r^m\int_0^1(1-t)^{-\beta}t^{\beta-1}t^m(1-tr^2)^{-\beta}dt.
\end{empheq}
\noindent
Here, we used the fact that
\begin{equation*}
\sum_{h=0}^{\infty}(-tr^2)^h \left(\begin{array}{c} -\beta \\ h \end{array} \right)=(1-tr^2)^{-\beta}.
\end{equation*}
\end{proof}
\noindent
Now observe that for each $k\in\mathbb Z$
\begin{equation*}
\frac{1}{n}\sum_{y:y^n=\xi}y^k=
\left\{\begin{array}{cl}
	\xi^{k/n}, & \text{if}\ \ k\equiv0\ \ \text{mod}\ \ n, \\
	0, & \text{if}\ \ k\not\equiv0\ \ \text{mod}\ \ n,
	   \end{array}\right.
\end{equation*}
\noindent
so for every $\beta\in(0,1)$, $r\in\ [0,1)$ and integer $n\geq2$, we obtain that
\begin{equation*}
U_{\beta}(r,\theta)=n\sum_{k=-\infty}^{\infty}b_{nk}\xi^k.
\end{equation*}
\noindent
Next, we have
\begin{empheq}{align*}
\sum_{k=-\infty}^{\infty}b_{nk}\xi^k & = \sum_{k=0}^{\infty}b_{nk}\xi^k + \sum_{k=1}^{\infty}b_{nk}\xi^{-k}\\
& = \sum_{k=0}^{\infty}\left(\frac{\sin(\beta\pi)}{\pi}r^{nk}\int_0^1(1-t)^{-\beta}t^{\beta-1}t^{nk}(1-tr^2)^{-\beta}dt\right)\xi^k\\
& + \sum_{k=1}^{\infty}\left(\frac{\sin(\beta\pi)}{\pi}r^{nk}\int_0^1(1-t)^{-\beta}t^{\beta-1}t^{nk}(1-tr^2)^{-\beta}dt\right)\xi^{-k}\\
& = \frac{\sin(\beta\pi)} {\pi} \int_0^1\frac{(1-t)^{-\beta}t^{\beta-1}} {(1-tr^2)^\beta}\left[\sum_{k=0}^{\infty}(tr)^{nk}\xi^k+\sum_{k=1}^{\infty}(tr)^{nk}\xi^{-k}\right]dt.
\end{empheq}
\noindent
Since
\begin{equation*}
\sum_{k=0}^{\infty}(tr)^{nk}\xi^k+\sum_{k=1}^{\infty}(tr)^{nk}\xi^{-k}=\frac{1-(tr)^{2n}}{|1-(tr)^n\xi|^2},
\end{equation*}
\noindent
we arrive at ($\ref{intrep}$):
\begin{equation*}
U_{\beta}(r,\theta)=\frac{n\sin(\beta\pi)} {\pi}\int_0^1\frac{(1-t)^{-\beta}t^{\beta-1}}{(1-tr^2)^{\beta}}\frac{1-(tr)^{2n}}{|1-(tr)^n\xi|^2}\,dt.
\end{equation*}
\qed


\subsection{Proofs for Section 3}

{\bf Proof of Theorem $\ref{locatcritpts}$.} First we find the critical values of $\theta$ by using the integral representation $(\ref{intrep})$ given in Corollary $\ref{prop1}$. The derivative with respect to $\theta$ is
\begin{empheq}{align*}
\frac{\partial U_{\beta}(r,\theta)}{\partial \theta}=&-\sin(n\theta)\frac{2n^2r^n\sin(\pi\beta)}{\pi}\times\\
&\int_0^1\frac{t^{n+\beta-1}(1-t)^{-\beta}}{(1-r^2t)^\beta}\frac{1-(rt)^{2n}}{(1+(rt)^{2n}-2(rt)^n\cos(n\theta))^2}\,dt.
\end{empheq}\noindent
Then if $\theta$ is a critical value of the potential, we have $\partial U_{\beta}(r,\theta)/\partial \theta=0$. This implies that $\sin(n\theta)=0$, so that $n\theta=\pi k, k\in\mathbb Z$,
or
 \begin{equation}\label{critval}
\theta=\pi k/n, \quad k\in\mathbb Z,
\end{equation}
because the above integral is positive.

Consider the case $k=0$ in ($\ref{critval}$). We see that there are possible critical points on a ray coming from the origin where our regular $n$-gon is
centered, and going through a vertex which sits on the $x$-axis. Now letting $\theta=0$ in ($\ref{intrep}$), we see the potential on that ray is given by
\begin{equation}\label{Uonbadsectors}
U_{\beta}(r,0)=\frac{n\sin(\pi\beta)}{\pi}\int_0^1\frac{t^{\beta-1}(1-t)^{-\beta}}{(1-r^2t)^\beta}\frac{1+(rt)^n}{1-(rt)^n}\,dt.
\end{equation}
\noindent
The critical points of $U_{\beta}(r,0)$ are found from
\begin{equation}
\frac{\partial U_{\beta}(r,0)}{\partial r}=0.
\end{equation}
Since the integrand on the right-hand side of $(\ref{Uonbadsectors})$ and its derivative are continuous with respect to $(r,t)\in[0,x_0)\times[0,1]$, for any $x_0\in[0,1)$, we can differentiate under the sign of the integral \cite[p. 236]{Rudin}. Then for the derivative of $U_{\beta}(r,0)$ we obtain
\begin{empheq}{align}\label{deri}
&\frac{\partial U_{\beta}(r,0)}{\partial r}=\frac{n\sin(\pi\beta)}{\pi}\times\\ \nonumber
&\int_0^1 t^{\beta-1}(1-t)^{-\beta}\left\{\frac{2\beta rt}{(1-r^2t)^{\beta+1}}\frac{1+(rt)^n}{1-(rt)^n}+\frac{1}{(1-r^2t)^\beta}\frac{2nr^{n-1}t^n}{(1-(rt)^n)^2}\right\}\,dt.
\end{empheq}
\noindent
For $r\in(0,1)$ and $t\in[0,1]$, we see that
\begin{equation*}
\frac{1}{1-t}\geq1,\ \ \frac{1}{1-tr^2}\geq1,\ \  \frac{1}{1-r^nt^n}\geq1, \ \  \frac{1+r^nt^n}{1-r^nt^n}\geq1.
\end{equation*}
\noindent
Therefore
\begin{empheq}{align*}
\frac{\partial U_{\beta}(r,0)}{\partial r}&\geq \frac{n\sin(\pi\beta)}{\pi}\int_0^1 t^{\beta-1} \left( 2\beta rt+2nr^{n-1}t^n\right)dt\\
&= \frac{n\sin(\pi\beta)}{\pi}\left\{\frac{2\beta r}{\beta+1}+\frac{2nr^{n-1}}{n+\beta}\right\}>0.
\end{empheq}
\noindent
Hence $\partial U_{\beta}(r,0)/\partial r$ is a strictly positive function for $r\in(0,1)$. Now looking at the expression ($\ref{deri}$) for $\partial U_{\beta}(r,0)/\partial r$
it is clear that it can be zero if and only if $r=0$. Thus for $r\in[0,1)$ the potential on that ray has only one critical point, namely at the origin. This also shows that the potential $u_\beta(r)$ does not have the critical points different from the origin for the values of $\theta=\pi k/n$, corresponding to the even values of $k\in\mathbb Z$.

So far we have showed that all the critical points of the Riesz potential ($\ref{rieszpotential}$) (if they exist) lie on the rays $\theta=\pi k/n$, where $k=1,3,5,\ldots, 2n-1$.
We now want to obtain further information on the critical points on those rays. For that we let $\theta=\pi k/n,  k=1,3,5,\ldots, 2n-1$ in ($\ref{rieszpotential1}$) and consider the case
$k=1$, which is enough for our purposes. We therefore obtain
\begin{equation}\label{potonbis}
u_\beta(r):=U_{\beta}(r,\pi/n)=\frac{n\sin(\pi\beta)}{\pi}J_n^{\beta}(r),
\end{equation}
\noindent
where
\begin{equation}\label{functionJ}
J_n^{\beta}(r):=\int_0^1\frac{t^{\beta-1}(1-t)^{-\beta}}{(1-r^2t)^\beta}\frac{1-(rt)^{n}}{1+(rt)^n}\,dt.
\end{equation}
\noindent
We are interested in the critical points of $J_n^{\beta}(r)$:
\begin{equation*}
\frac{dJ_n^{\beta}(r)}{dr}=0,\ \ n\geq3.
\end{equation*}
\noindent
Clearly the integrand in ($\ref{functionJ}$) and its derivative are continuous with respect to $(r,t)\in[0,x_0)\times[0,1]$, for any $x_0\in[0,1)$. Hence we can differentiate under the sign of the integral in ($\ref{functionJ}$)  \cite[p. 236]{Rudin}.\\\indent
Now let us show that $u_\beta'(r)>0$ for $r$ from a small right-hand side neighborhood of $0$, for all $n\geq3$. It is clear that
\begin{equation}\label{derexprssn}
\frac{dJ_n^{\beta}(r)}{dr}=2r\int_0^1\frac{t^{\beta}(1-t)^{-\beta}}{(1-r^2t)^{\beta+1}}\frac{g_n(r,t)}{(1+(rt)^n)^2}\,dt,\quad n\geq3,
\end{equation}
where
\begin{equation}\label{functiong}
g_n(r,t)=\beta(1-(rt)^{2n})-nr^{n-2}t^{n-1}(1-r^2t).
\end{equation}

We can estimate $g_n(r,t)$ from below in the following manner:

\begin{empheq}{align*}
g_n(r,t)&=\beta(1-(rt)^{2n})-nr^{n-2}t^{n-1}(1-r^2t)\\
&=\beta-\beta r^{2n}t^{2n}-nr^{n-2}t^{n-1}+nr^nt^n\\
&\geq\beta-\beta r^{2n}-nr^{n-2}\\
&\geq\beta-\beta r^{n-2}-nr^{n-2}\\
&=\beta- r^{n-2}(\beta+n),
\end{empheq}
for all $t\in[0,1]$, all $n\geq3$ and all $\beta\in(0,1)$. Thus if we take
\begin{equation}\label{lowerb}
r<r_l(n,\beta):=\left(\frac{\beta}{\beta+n}\right)^{\frac{1}{n-2}}
\end{equation}
it follows that  $g_n(r,t)>0$ for all $t\in[0,1]$ and all $n\geq3$.\\

Next, we note that the function $t^{\beta}(1-t)^{-\beta}(1-r^2t)^{-(\beta+1)}$ is continuous and nonnegative for all $0\leq t\leq 1$. Therefore
by the second Mean Value Theorem for integrals there exists $t^*\in[0,1]$ such that
\begin{empheq}{align*}
&\int_0^1\frac{t^{\beta}(1-t)^{-\beta}}{(1-r^2t)^{\beta+1}}\frac{g_n(r,t)}{(1+(rt)^n)^2}\,dt\\
&=\frac{g_n(r,t^*)}{(1+(rt^*)^{n})^2}\int_0^1\frac{t^{\beta}(1-t)^{-\beta}}{(1-r^2t)^{\beta+1}}\,dt\\
&=\frac{g_n(r,t^*)}{(1+(rt^*)^n)^2} B(1+\beta,1-\beta) {}_2 F_1(1+\beta,1+\beta;2;r^2),
\end{empheq}
where ${}_2 F_1$ is the Gauss hypergeometric function \cite{AAR}.
\noindent
Therefore we see that if $0<r<r_l(n,\beta)$ then $u_\beta'(r)>0$ for all $n\geq3$. One can also check that
$u_\beta'(r_l(n,\beta))\neq0$. Thus proved the following:
\begin{lemma}\label{leftinterv}
Potential $u_\beta(r)$ has no critical points on $(0,r_l(n,\beta))$.
\end{lemma}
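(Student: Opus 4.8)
The plan is to reduce the statement to a sign computation and show that $u_\beta'(r)$ is strictly positive throughout $(0,r_l(n,\beta))$; since a point is critical exactly when this derivative vanishes, positivity on the whole interval leaves no room for a critical point there. Because $u_\beta(r)=\frac{n\sin(\pi\beta)}{\pi}J_n^\beta(r)$ by ($\ref{potonbis}$) and the prefactor $\frac{n\sin(\pi\beta)}{\pi}$ is strictly positive for $\beta\in(0,1)$, it suffices to prove $\frac{d}{dr}J_n^\beta(r)>0$ on that interval.

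First I would differentiate the integral representation ($\ref{functionJ}$) of $J_n^\beta$ under the integral sign. This is legitimate because the integrand and its $r$-derivative are jointly continuous on $[0,x_0)\times[0,1]$ for every $x_0\in(0,1)$, so the standard theorem on differentiation under the integral applies. After collecting terms one is led to the form ($\ref{derexprssn}$), in which a factor $2r$ is pulled out and the sign of the remaining integrand is entirely governed by the polynomial quantity $g_n(r,t)$ of ($\ref{functiong}$), since every other factor in the integrand is nonnegative on $[0,1]$.

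The heart of the argument is a lower bound for $g_n(r,t)$ that is uniform in $t$. Expanding $g_n(r,t)=\beta-\beta(rt)^{2n}-nr^{n-2}t^{n-1}+nr^nt^n$, I would discard the nonnegative term $nr^nt^n$ and bound the two negative contributions using $t\le 1$ together with $r<1$, noting in particular that $r^{2n}\le r^{n-2}$ for $n\ge 3$. This yields $g_n(r,t)\ge \beta-(\beta+n)r^{n-2}$ for all $t\in[0,1]$, and the right-hand side is strictly positive precisely when $r<r_l(n,\beta)=\bigl(\beta/(\beta+n)\bigr)^{1/(n-2)}$, as in ($\ref{lowerb}$). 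Consequently, for $r$ in this range the integrand in ($\ref{derexprssn}$) is strictly positive, and hence so is the integral. To make this completely explicit I would invoke a weighted mean value theorem for integrals, pulling the continuous factor $g_n(r,t^*)/(1+(rt^*)^n)^2$ out of the nonnegative weight $t^\beta(1-t)^{-\beta}(1-r^2t)^{-(\beta+1)}$, whose integral over $[0,1]$ equals $B(1+\beta,1-\beta)\,{}_2F_1(1+\beta,1+\beta;2;r^2)$, a product of two positive quantities. It follows that $u_\beta'(r)>0$ for every $r\in(0,r_l(n,\beta))$, and the lemma is proved.

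I expect the main obstacle to be the uniform-in-$t$ lower bound for $g_n$. The term $-nr^{n-2}t^{n-1}$ is the dangerous one: it competes with the constant $\beta$, and a careless estimate would leave either a $t$-dependent bound or an implicit threshold in $r$. The decisive step is to replace all powers of $t$ by their values at $t=1$ and to simplify $r^{2n}$ against $r^{n-2}$, so that the threshold collapses to the clean closed form $r_l(n,\beta)$; once that bound is secured, the remaining positivity is routine.
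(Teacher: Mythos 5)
Your proposal is correct and follows essentially the same route as the paper: differentiate $J_n^\beta$ under the integral sign to obtain ($\ref{derexprssn}$), reduce the sign question to the polynomial $g_n(r,t)$ of ($\ref{functiong}$), and establish the uniform bound $g_n(r,t)\ge\beta-(\beta+n)r^{n-2}$ by discarding $nr^nt^n$, setting $t=1$ in the negative terms, and using $r^{2n}\le r^{n-2}$, which yields positivity of $u_\beta'$ exactly for $r<r_l(n,\beta)$. The concluding appeal to the weighted mean value theorem and the Beta/hypergeometric evaluation is likewise the paper's final step (and, as in the paper, is not strictly needed once the integrand is seen to be positive).
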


To obtain an upper estimate on the location of a critical point on a bisector we recall that the Riesz potential on a bisector for $0\leq r<1$ and $n\geq3$ is given by
\begin{equation}\label{trig1}
u_{\beta}(r)=\sum_{j=1}^n\frac{1}{(1+r^2-2r\cos(\theta_j))^{\beta}},
\end{equation}
\noindent
where $\theta_j=\pi(2j-1)/n$, $j=1,2,\ldots,n$.\\\indent
For the derivative $u_\beta'(r)$ we find
\begin{equation}\label{trig2}
u_{\beta}'(r)=-2\beta\sum_{j=1}^n\frac{r-\cos(\theta_j)}{(1+r^2-2r\cos(\theta_j))^{\beta+1}}.
\end{equation}
so that 
\begin{equation*}
u_{\beta}'(\cos(\pi/n))=-2\beta\sum_{j=1}^n\frac{\cos(\pi/n)-\cos(\theta_j)}{(1+\cos^2(\pi/n)-2\cos(\pi/n)\cos(\theta_j))^{\beta+1}}.
\end{equation*}
\noindent
It is now obvious that $u_{\beta}'(\cos(\pi/n))<0$ for all $n\geq3$. Let $r_u(n):=\cos(\pi/n)$.
\\\indent
Since $u_{\beta}'(r)$ is continuous and changes sign on $(0,r_u(n))$, we infer that there exists $r_0\in(0,r_u(n))$ such that $u_{\beta}'(r_0)=0$. Also from $(\ref{trig2})$ it is clear that $u_{\beta}'(r)<0$ for $r\in[r_u(n),1)$. Hence we proved the following:
\begin{lemma}\label{rightinterv}
Potential $u_\beta(r)$ does not have critical points on $[r_u(n),1)$.
\end{lemma}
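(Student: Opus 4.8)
The plan is to show that the derivative $u_\beta'(r)$ from (\ref{trig2}) is strictly negative on all of $[r_u(n),1)$; since a critical point requires $u_\beta'(r)=0$, this rules out critical points on that interval. As $-2\beta<0$, it is enough to prove that
\[
\sum_{j=1}^n\frac{r-\cos(\theta_j)}{(1+r^2-2r\cos(\theta_j))^{\beta+1}}>0\qquad\text{for every }r\in[r_u(n),1),
\]
where $\theta_j=\pi(2j-1)/n$ and $r_u(n)=\cos(\pi/n)$.

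The key step is the elementary trigonometric fact that $\cos(\theta_j)\leq\cos(\pi/n)$ for all $j=1,\ldots,n$, with strict inequality for every $j\notin\{1,n\}$. To see this, note that $\theta_1,\ldots,\theta_n$ are the equally spaced angles $\pi/n,3\pi/n,\ldots,(2n-1)\pi/n$ in $(0,2\pi)$, and that the two nearest to $0\pmod{2\pi}$ are $\theta_1=\pi/n$ and $\theta_n=2\pi-\pi/n$, each with cosine exactly $\cos(\pi/n)$. Because $\cos$ is decreasing on $[0,\pi]$ and increasing on $[\pi,2\pi]$, all remaining angles have strictly smaller cosine. I would verify this against the cases $n=3,4$ as a sanity check before relying on it.

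With this fact in hand the conclusion is immediate. For $r\geq r_u(n)=\cos(\pi/n)$ each numerator satisfies $r-\cos(\theta_j)\geq\cos(\pi/n)-\cos(\theta_j)\geq0$, so all numerators are nonnegative, and the $n-2\geq1$ terms with $j\notin\{1,n\}$ are strictly positive. Every denominator is strictly positive, since $1+r^2-2r\cos(\theta_j)=(1-r)^2+2r(1-\cos(\theta_j))\geq(1-r)^2>0$ for $r<1$. Hence the displayed sum is strictly positive, giving $u_\beta'(r)<0$ throughout $[r_u(n),1)$, and in particular $u_\beta'$ does not vanish there. The only real content is the trigonometric lemma identifying $\pi/n$ as the angle of maximal cosine among the $\theta_j$; the remaining sign analysis is routine, and I would take particular care at the endpoint $r=r_u(n)$, where the terms $j=1,n$ vanish but the surviving $n-2$ terms keep the sum positive, so the endpoint is genuinely free of critical points as well.
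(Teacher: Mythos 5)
Your proposal is correct and follows essentially the same route as the paper: the paper likewise uses the finite-sum formula for $u_\beta'(r)$ and the observation that $\cos(\theta_j)\leq\cos(\pi/n)$ to conclude $u_\beta'(r)<0$ on $[r_u(n),1)$, though it leaves the trigonometric comparison as "obvious" where you spell it out. Your explicit treatment of the endpoint $r=r_u(n)$, where only the $n-2$ terms with $j\notin\{1,n\}$ contribute strict positivity, is a welcome clarification of a step the paper glosses over.
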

Combining Lemma $\ref{leftinterv}$ and Lemma $\ref{rightinterv}$ we obtain
\begin{lemma}\label{intervcritpoints}
Potential $u_{\beta}(r)$ has all its critical points in the interval $(r_l(n,\beta)$, $r_u(n))$.
\end{lemma}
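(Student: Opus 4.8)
The plan is simply to assemble the two preceding lemmas into a single interval statement, after checking that nothing is lost at their common endpoints. Recall that $u_\beta(r)$ is defined for $r\in[0,1)$, with the origin $r=0$ being the trivial critical point; the assertion concerns the location of the remaining (non-trivial) critical points, i.e.\ those with $r\neq 0$. The complement of $(r_l(n,\beta),r_u(n))$ inside the punctured interval $(0,1)$ splits into the piece to the left of (and including) $r_l(n,\beta)$ and the piece to the right of (and including) $r_u(n)$, so it suffices to rule out a critical point in each of these two regions.

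First I would invoke Lemma~\ref{leftinterv} to exclude the open interval $(0,r_l(n,\beta))$, and then the observation recorded just before it, namely $u_\beta'(r_l(n,\beta))\neq 0$, to exclude the left endpoint itself. Concretely, at $r=r_l(n,\beta)$ the definition \eqref{lowerb} gives $r_l^{\,n-2}=\beta/(\beta+n)$, so the lower bound $g_n(r,t)\geq \beta-r^{n-2}(\beta+n)$ derived from \eqref{functiong} degenerates to $0$; but that bound was obtained by discarding the nonnegative term $nr^nt^n$ and by weakening $r^{2n}$ to $r^{n-2}$, and since $g_n(r_l,t)$ is continuous with $g_n(r_l,1)>0$, it is in fact strictly positive on a neighborhood of $t=1$. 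Hence the integral in \eqref{derexprssn} is strictly positive and $u_\beta'(r_l(n,\beta))>0$, so $r_l(n,\beta)$ is not a critical point. Next, Lemma~\ref{rightinterv} excludes the whole closed interval $[r_u(n),1)$, in particular the right endpoint $r_u(n)$. Combining these exclusions, any critical point $r_0\neq 0$ of $u_\beta(r)$ must satisfy $r_l(n,\beta)<r_0<r_u(n)$, which is exactly the claim.

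The only thing requiring care — and hence the ``main obstacle,'' such as it is — is the bookkeeping at the junctions: the two lemmas are deliberately stated with the left interval open at $r_l$ and the right interval closed at $r_u$, precisely so that their union covers all of $(0,1)\setminus(r_l(n,\beta),r_u(n))$ once the strict inequality $u_\beta'(r_l(n,\beta))>0$ is verified. Since $u_\beta'(r)$ is continuous on $(0,1)$, no critical value can hide on the boundary between the two regions, and the conclusion follows at once.
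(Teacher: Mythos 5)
Your proof is correct and follows the same route as the paper, which obtains the lemma by simply combining Lemma~\ref{leftinterv} and Lemma~\ref{rightinterv} (the left endpoint being excluded by the remark, stated just before Lemma~\ref{leftinterv}, that $u_\beta'(r_l(n,\beta))\neq 0$). You in fact supply the detail the paper leaves to the reader there — note only that the positivity of the integral at $r=r_l(n,\beta)$ should be justified by observing that the chain of estimates gives $g_n(r_l,t)\ge 0$ for \emph{all} $t\in[0,1]$ together with strict positivity near $t=1$, since positivity on a neighborhood of $t=1$ alone would not suffice.
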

Since the potential $u_\beta(r)$ is a restriction of 2D potential  $U_{\beta}(r,\theta)$ to the perpendicular bisectors, the proof of the theorem follows.
\qed

\begin{remark}
We note that for the potential $u_\beta(r)$ a brief calculation reveals that $0$ is a critical point and
$u''_\beta(0)=2\beta^2n$. Therefore $0$ is a local minimum for the potential $u_\beta(r)$.
\end{remark}


\subsection{{\bf Proofs for Section 4}}

{\bf Proof of Theorem $\ref{unidistr}$.}
We know that $r(n,\beta)\in(r_l(n,\beta),r_u(n))$ for all $\beta\in(0,1)$ and all $n\geq3$. Note that
\begin{equation*}
\lim_{n\rightarrow\infty}r_l(n,\beta)=\lim_{n\rightarrow\infty}\left(\frac{\beta}{\beta+n}\right)^{\frac{1}{n-2}}=1,
\end{equation*}
\begin{equation*}
\lim_{n\rightarrow\infty}r_u(n)=\lim_{n\rightarrow\infty}\cos\left(\frac{\pi}{n}\right)=1.
\end{equation*}
Therefore
\begin{equation*}
\lim_{n\rightarrow\infty}r(n,\beta)=1.
\end{equation*}
\qed

{\bf Proof of Theorem $\ref{betasmall}$.}
Without loss of generality we may assume that $\beta\in(0,1/2)$. Recall that according to $(\ref{potonbis})$ the potential $u_\beta(r)$ has the form
\begin{equation*}
u_\beta(r)=\frac{n\sin(\pi\beta)}{\pi}\int_0^1\frac{t^{\beta-1}(1-t)^{-\beta}}{(1-r^2t)^\beta}\frac{1-(rt)^{n}}{1+(rt)^n}dt, 
\end{equation*}
and so
\begin{empheq}{align}\label{est_1}
u'_\beta(r)=&\frac{2nr\sin(\pi\beta)}{\pi}\left\{\beta\int_0^1\frac{t^{\beta}(1-t)^{-\beta}}{(1-r^2t)^{\beta+1}}\frac{1-(rt)^n}{1+(rt)^n}\,dt\right.\\\nonumber
-&\left.nr^{n-2}\int_0^1\frac{t^{n+\beta-1}(1-t)^{-\beta}}{(1-r^2t)^{\beta}}\frac{1}{(1+(rt)^n)^2}\,dt\right\}.
\end{empheq}
We want to estimate the integrals on the right-hand side of $(\ref{est_1})$. For that a few simple inequalities will prove handy.\\\indent 
Observe that $1-(rt)^n=(1-(rt))(1+rt+(rt)^2+\ldots+(rt)^{n-1})$. Hence, as $r\in(0,1)$ and $t\in[0,1]$ we deduce that $1-(rt)^n=(1-(rt))(1+rt+(rt)^2+\ldots+(rt)^{n-1})\leq(1-r^2t)(1+1+\ldots+1)=n(1-r^2t)$. In addition, $(1-r^2t)^{-\beta}\leq(1-t)^{-\beta}$ and, trivially $(1+r^nt^n)^{-1}\leq1$. Therefore
\begin{empheq}{align}\label{est_2}
\int_0^1\frac{t^{\beta}(1-t)^{-\beta}}{(1-r^2t)^{\beta+1}}\frac{1-(rt)^n}{1+(rt)^n}\,dt&=\int_0^1\frac{t^{\beta}(1-t)^{-\beta}}{(1-r^2t)(1-r^2t)^{\beta}}\frac{1-(rt)^n}{1+(rt)^n}\,dt\\\nonumber
&\leq\int_0^1\frac{t^{\beta}(1-t)^{-\beta}}{(1-r^2t)(1-t)^{\beta}}\frac{n(1-r^2t)}{1+(rt)^n}\,dt\\\nonumber
&\leq n\int_0^1t^\beta(1-t)^{-2\beta}\,dt\\\nonumber
&=nB(1+\beta,1-2\beta),&
\end{empheq}
where $B(x,y)$ is the beta-function.

Also, the second integral on the right-hand side of $(\ref{est_1})$ can be estimated as follows:

\begin{empheq}{align}\label{est_3}
\int_0^1\frac{t^{n+\beta-1}(1-t)^{-\beta}}{(1-r^2t)^{\beta}}\frac{1}{(1+(rt)^n)^2}\,dt &\geq\frac{1}{4}\int_0^1t^{n+\beta-1}\,dt \\\nonumber
&=\frac{1}{4(n+\beta)}.&
\end{empheq}

Inequalities $(\ref{est_2})$ and $(\ref{est_3})$ imply that
\begin{equation}\label{est_4}
u'_\beta(r)\leq\frac{2n^2r\sin(\pi\beta)}{\pi}\left\{\beta B(1+\beta,1-2\beta)-\frac{r^{n-2}}{4(n+\beta)}\right\}.
\end{equation}
Let
\begin{equation}\label{rupperbd}
r_u(n,\beta):=\left\{4\beta (n+\beta)B(1+\beta,1-2\beta)\right\}^{\frac{1}{n-2}}.
\end{equation}
Then, as it follows from $(\ref{est_4})$, if $r>r_u(n,\beta)$ we have $u'_\beta(r)<0$. This shows that there are no critical points on the right from $r_u(n,\beta)$. Hence all the critical points belong to the interval $(r_l(n,\beta),r_u(n,\beta))$.\\\indent
Note that ($n\geq3$ is fixed)
\begin{equation*}
\lim_{\beta\rightarrow0+}r_u(n,\beta)=\lim_{\beta\rightarrow0+}\left\{4\beta (n+\beta)B(1+\beta,1-2\beta)\right\}^{\frac{1}{n-2}}=0,
\end{equation*}
and also
\begin{equation*}
\lim_{\beta\rightarrow0+}r_l(n,\beta)=\lim_{\beta\rightarrow0+}\left(\frac{\beta}{\beta+n}\right)^{\frac{1}{n-2}}=0.
\end{equation*}
Therefore, since $r_l(n,\beta)<r(n,\beta)<r_u(n,\beta)$ clearly
\begin{equation*}
\lim_{\beta\rightarrow0+}r(n,\beta)=0,
\end{equation*}
as desired.
\qed

{\bf Proof of Theorem $\ref{betalarge}$.}
We will need a finite sum representation for the Riesz potential on a perpendicular bisector. It is given by
\begin{equation*}
u_{\beta}(r)=\sum_{j=1}^n\frac{1}{(1+r^2-2r\cos(\pi(2j-1)/n))^{\beta}}.
\end{equation*}
Differentiating, we obtain
\begin{equation*}
u_{\beta}'(r)=-2\beta\sum_{j=1}^n\frac{r-\cos(\pi(2j-1)/n)}{(1+r^2-2r\cos(\pi(2j-1)/n))^{\beta+1}}.
\end{equation*}
From the above formula it is clear that if $r(n,\beta)$ is an equilibrium point, then for any $n\geq3$ and $\beta>0$ we have un upper estimate
\begin{equation}\label{uniest}
r(n,\beta)<\cos\left(\frac{\pi}{n}\right).
\end{equation}
The desired result then follows by passing to the $\limsup_{\beta\rightarrow+\infty}$ in the both sides of the inequality in $(\ref{uniest})$.
\qed


\subsection{{\bf Proofs for Section 5}}

{\bf Proof of Proposition $\ref{simplerep}$.} We will use a simple argument which can be found, for example, in \cite{ChuMar}. Consider the following cyclotomic
polynomial
\begin{equation*}
1-z^n=\prod_{j=1}^{n}(1-z\zeta_n^j),
\end{equation*}
where $\zeta_n=e^{2\pi i/n}$ is the $n$-th primitive root of unity. Consider a partial fraction decomposition for $(1-z^n)^{-1}$:
\begin{equation*}
\frac{1}{1-z^n}=\frac{1}{n}\sum_{j=1}^n\frac{1}{1-z\zeta_n^j}.
\end{equation*}
Hence
\begin{equation}\label{cyclo1}
\sum_{j=1}^n\frac{1}{1-z\zeta_n^j}=\frac{n}{1-z^n}.
\end{equation}
Now replace $z$ by $1/z$ in $(\ref{cyclo1})$. We find that
\begin{equation*}
\frac{z^nn}{z^n-1}=\sum_{j=1}^n\frac{z}{z-\zeta_n^j}=-\sum_{j=1}^n\frac{z\overline{\zeta_n^j}}{1-z\overline{\zeta_n^j}}=-\sum_{j=1}^n\frac{z\zeta_n^j}{1-z\zeta_n^j},
\end{equation*}
where we used the fact that $\overline{\zeta_n}$ is an $n$-th root of unity.  Thus
\begin{equation}\label{cyclo2}
\sum_{j=1}^n\frac{z\zeta_n^j}{1-z\zeta_n^j}=\frac{z^nn}{1-z^n}.
\end{equation}
Let $z=re^{i\theta}$. Taking the real part in $(\ref{cyclo2})$ we obtain
\begin{equation}\label{cyclo3}
\sum_{j=1}^{n}\frac{r^2-r\cos(2\pi j/n+\theta)}{1+r^2-2r\cos(2\pi j/n+\theta)}=\frac{nr^{2n}-nr^n\cos(n\theta)}{1+r^{2n}-2r^n\cos(n\theta)},
\end{equation}
while taking the real part in $(\ref{cyclo1})$ yields
\begin{equation}\label{cyclo4}
\sum_{j=1}^n\frac{1-r\cos(2\pi j/n+\theta)}{1+r^2-2r\cos(2\pi j/n+\theta)}=\frac{n-nr^n\cos(n\theta))}{1+r^{2n}-2r^n\cos(n\theta)}.
\end{equation}
Subtracting $(\ref{cyclo3})$ from $(\ref{cyclo4})$ we obtain that
\begin{equation*}
\sum_{j=1}^n\frac{1-r^2}{1+r^2-2r\cos(2\pi j/n+\theta)}=\frac{n(1-r^{2n})}{1+r^{2n}-2r^n\cos(n\theta)}.
\end{equation*}
We finally obtain that
\begin{equation*}
U_1(r,\theta)=\sum_{j=1}^n\frac{1}{1+r^2-2r\cos(2\pi j/n-\theta)}=\frac{n}{1-r^2}\frac{1-r^{2n}}{1+r^{2n}-2r^n\cos(n\theta)},
\end{equation*}
as claimed.
\qed

{\bf Proof of Theorem $\ref{critptsbeta1}$.} It is easy to see that the critical values of the angle $\theta$ are given by $\theta=\pi k/n$, where 
$k\in\mathbb Z$. Next, we sort out even and odd values of $k$. First, we have the following:
\begin{lemma}\label{crtianglesbeta1}
For $r\in[0,1)$, the potential $U_1(r,\theta)$ on a ray corresponding to even  $k$ has only one critical point, namely the origin. 
\end{lemma}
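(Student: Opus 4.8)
The plan is to exploit the closed form for $U_1(r,\theta)$ supplied by Proposition \ref{simplerep} and reduce the claim to a one-variable monotonicity statement. First I would note that for even $k$, say $k=2m$, one has $n\theta=\pi k=2\pi m$, so that $\cos(n\theta)=1$. Substituting this into (\ref{finsum}) collapses the angular factor in the denominator, since
\[
1+r^{2n}-2r^n\cos(n\theta)=1+r^{2n}-2r^n=(1-r^n)^2.
\]
Using $1-r^{2n}=(1-r^n)(1+r^n)$ and cancelling one factor of $1-r^n$, the restriction of the potential to such a ray becomes the single-variable function
\[
v(r):=\frac{n}{1-r^2}\cdot\frac{1+r^n}{1-r^n},\qquad r\in[0,1).
\]

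Next I would show that $v$ is strictly increasing on $(0,1)$, which forces its only critical point on $[0,1)$ to be the origin. The cleanest route is the logarithmic derivative: since $v(r)>0$ throughout $[0,1)$, we have
\[
\frac{v'(r)}{v(r)}=\frac{2r}{1-r^2}+\frac{nr^{n-1}}{1+r^n}+\frac{nr^{n-1}}{1-r^n}.
\]
Each of the three summands is nonnegative for $r\in[0,1)$, and the first is strictly positive for $r\in(0,1)$; hence $v'(r)>0$ there. At $r=0$ all three terms vanish, the second and third because $n\geq3$ makes $r^{n-1}$ vanish, so $v'(0)=0$.

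Therefore $v'(r)=0$ precisely at $r=0$ on $[0,1)$, and the restriction of $U_1(r,\theta)$ to any ray with even $k$ has the origin as its unique critical point, which is the assertion of the lemma. I do not anticipate any real obstacle here: the entire argument turns on the single observation that even $k$ forces $\cos(n\theta)=1$, which converts the angular denominator into a perfect square and produces the decisive cancellation, after which the monotonicity is a one-line sign check on the logarithmic derivative.
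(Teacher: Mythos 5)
Your proof is correct and follows essentially the same route as the paper: reduce to the closed form $\frac{n}{1-r^2}\cdot\frac{1+r^n}{1-r^n}$ on the even-$k$ rays via Proposition \ref{simplerep} and show this is strictly increasing on $(0,1)$. The only cosmetic difference is that you verify positivity of the derivative through the logarithmic derivative, while the paper differentiates the product directly and observes both terms are positive; the substance is identical.
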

\begin{proof}
It is sufficient to consider the case $k=0$. We readily find
\begin{equation*}
U_1(r,0)=\frac{n}{1-r^2}\,\frac{1+r^n}{1-r^n}.
\end{equation*}
Then, differentiating we obtain
\begin{equation*}
\frac{\partial U_1(r,0)}{\partial r}=n\left\{\frac{2r}{(1-r^2)^2}\,\frac{1+r^n}{1-r^n}+\frac{2n}{1-r^2}\,\frac{r^{n-1}}{(1-r^n)^2}\right\}.
\end{equation*}
Now trivial estimates show that $\partial U_1(r,0)/\partial r>0$ for $r\in(0,1)$. Therefore the potential $U_1(r,0)$ has only one critical point on the rays corresponding to even $k$, namely the origin.
\end{proof}
Lemma $\ref{crtianglesbeta1}$ implies that all the critical points lie on the rays stemming from the origin and bisecting the edges of the regular polygon, i.e. for 
$\theta=\pi k/n, k\in\mathbb Z$, with odd $k$.

Let us consider a restriction of the potential $U_1(r,\theta)$ to the perpendicular bisectors of the sides. Without loss of generality
we can assume $k=1$ and set $v(r):=U_1(r,\pi/n)$. Then
\begin{equation}\label{potenv}
v(r)=\frac{n}{1-r^2}\frac{1-r^n}{1+r^n}.
\end{equation}
To find the critical points of $v(r)$, we differentiate $(\ref{potenv})$:
\begin{equation}\label{derv}
v'(r)=2n\frac{r(1-r^{2n}-nr^{n-2}+nr^n)}{(1-r^2)^2(1+r^n)^2}.
\end{equation}
We clearly see that there is a critical point of the potential $v(r)$ located at the origin. Now consider the polynomial
$p_n(r)=-r^{2n}+nr^n-nr^{n-2}+1$. Then $p_n'(r)=-2nr^{2n-1}+n^2r^{n-1}-n(n-2)r^{n-3}$, and $p_n''(r)=-2n(2n-1)r^{2n-2}+
n^2(n-1)r^{n-2}-n(n-2)(n-3)r^{n-4}$. Note that $p_n(1)=p_n'(1)=0$ and $p_n''(1)=-4n\neq 0$ for all $n\geq3$. This shows that $r=1$ is a zero
of multiplicity 2 of the polynomial $p_n(r)$. Now observe that by Decartes's rule polynomial $p_n(r)$ has exactly three
positive roots. Next, we will need the following:
\begin{lemma}\label{interlemma}
For all $r>1$ and for all $n\geq3$ $p_n(r)<0$. 
\end{lemma}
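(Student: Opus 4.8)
The plan is to factor $p_n(r)$ so as to isolate the root at $r=1$ and reduce the inequality to a clean comparison of two explicit positive quantities. First I would regroup the terms as $p_n(r) = (1 - r^{2n}) + n r^{n-2}(r^2 - 1)$ and use the geometric-series identity $r^{2n} - 1 = (r^2 - 1)\sum_{k=0}^{n-1} r^{2k}$ to pull out a common factor of $r^2 - 1$. This yields the factorization $p_n(r) = (r^2-1)\bigl(n r^{n-2} - \sum_{k=0}^{n-1} r^{2k}\bigr)$, which already displays the zero at $r=1$ consistent with the earlier computation $p_n(1)=p_n'(1)=0$.

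Next, since $r^2 - 1 > 0$ for $r > 1$, the sign of $p_n(r)$ is governed entirely by the second factor, so the claim $p_n(r) < 0$ becomes equivalent to the inequality $\sum_{k=0}^{n-1} r^{2k} > n r^{n-2}$. I would establish this via the arithmetic--geometric mean inequality applied to the $n$ numbers $r^0, r^2, \dots, r^{2(n-1)}$: their geometric mean equals $r^{(2/n)\sum_{k=0}^{n-1}k} = r^{n-1}$, so that $\frac{1}{n}\sum_{k=0}^{n-1} r^{2k} \geq r^{n-1}$, i.e. $\sum_{k=0}^{n-1} r^{2k} \geq n r^{n-1}$. Finally, for $r > 1$ we have $r^{n-1} > r^{n-2}$, whence $\sum_{k=0}^{n-1} r^{2k} \geq n r^{n-1} > n r^{n-2}$, giving the required strict inequality and therefore $p_n(r) < 0$.

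The one subtle point, which I expect to be the main obstacle, is arranging the comparison so that the estimate is genuinely strict on all of $(1,\infty)$ rather than only at $r=1$. The AM-GM step by itself is an equality at $r=1$, so it cannot deliver strictness alone; the resolution is that the gap between the exponents $n-1$ and $n-2$ supplies a strict inequality for every $r>1$, and the two effects combine cleanly. An alternative route, should the factorization seem less transparent, is to invoke the data already recorded before the lemma ($p_n(1)=p_n'(1)=0$ and $p_n''(1)=-4n<0$) together with $p_n(r)\to-\infty$ as $r\to+\infty$ and Descartes's bound of three positive roots to exclude any sign change on $(1,\infty)$; but the explicit factorization plus AM-GM is shorter and fully self-contained.
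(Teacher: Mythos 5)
Your proof is correct, and it takes a genuinely different route from the paper's. The paper argues via monotonicity: it shows $p_n'(r)<0$ for $r>1$ by reducing that claim to the inequality $n-2>nr^2-2r^{n+2}$, which it verifies by checking that $\psi(r)=nr^2-2r^{n+2}$ is strictly decreasing past $r=1$ with $\psi(1)=n-2$; combined with $p_n(1)=0$ this forces $p_n(r)<p_n(1)=0$ on $(1,\infty)$. You instead factor out the known zero at $r=1$ directly, writing
\begin{equation*}
p_n(r)=(1-r^{2n})+nr^{n-2}(r^2-1)=(r^2-1)\Bigl(nr^{n-2}-\sum_{k=0}^{n-1}r^{2k}\Bigr),
\end{equation*}
and then dispose of the second factor by AM--GM, $\sum_{k=0}^{n-1}r^{2k}\geq nr^{n-1}>nr^{n-2}$ for $r>1$. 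Both arguments are elementary and short; yours has the advantage of exhibiting the structure of $p_n$ explicitly (the factor $r^2-1$ together with the vanishing of the second factor at $r=1$ accounts for the double root recorded just before the lemma), and it avoids the second round of differentiation. Your handling of strictness is also sound: the AM--GM step alone degenerates to equality at $r=1$, but the strict gap $r^{n-1}>r^{n-2}$ for $r>1$ (or, alternatively, the fact that AM--GM is strict for distinct terms) delivers the strict sign on all of $(1,\infty)$.
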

\begin{proof}
Let $r>1$ and $n\geq3$. First we show that $p_n'(r)<0$. As $p_n'(r)=-2nr^{2n-1}+n^2r^{n-1}-n(n-2)r^{n-3}$, we need to show that $2nr^{2n-1}-n^2r^{n-1}+n(n-2)r^{n-3}>0$. 
That is the same as $n-2>nr^2-2r^{n+2}$. Consider the function $\psi(r)=nr^2-2r^{n+2}$. Then $\psi'(r)=2nr-2(n+2)r^{n+1}=2r(n-(n+2)r^n)<0$ for $r>1$, so that the function $\psi(r)$ is strictly decreasing. Hence $\psi(1)>\psi(r)$ for $r>1$. But $\psi(1)=n-2$, so $n-2>nr^2-2r^{n+2}$, as claimed. This tells us that $p_n'(r)<0$ for $r>1$, as desired.
Thus we see that $p_n(r)$ is a strictly decreasing function of $r$ for $r>1$. Therefore $p_n(1)>p_n(r)$ for $r>1$, which implies that $p_n(r)<0$. This concludes the proof.
\end{proof}
Since by the Lemma $\ref{interlemma}$ $p_n(r)<0$ for all $r>1$ and all $n\geq3$, we conclude that $p_n(r)$ has exactly one simple zero on $(0,1)$.
Thus the potential $v(r)$ has exactly one critical point on $(0,1)$. This completes the proof of the Theorem $\ref{critptsbeta1}$.
\qed

At this point we note that in the case $\beta=1$ it is quite easy to show that the potential is in fact a Morse function. Indeed, we have the following 
\begin{lemma}\label{morse}
The potential $v(r)$ is a Morse function, that is its critical points are non-degenerate.
\end{lemma}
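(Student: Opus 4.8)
The plan is to exploit the factored form of $v'$ already obtained in the proof of Theorem~\ref{critptsbeta1}. Set $p_n(r)=-r^{2n}+nr^n-nr^{n-2}+1$ and $Q(r):=\frac{2n}{(1-r^2)^2(1+r^n)^2}$, and write $h(r):=p_n(r)\,Q(r)$, so that formula (\ref{derv}) becomes $v'(r)=r\,h(r)$. Here $Q$, and hence $h$, is smooth on $[0,1)$, and $Q$ is strictly positive there. By Theorem~\ref{critptsbeta1} the only critical points of $v$ on $[0,1)$ are $r=0$ and the unique simple zero $r_0\in(0,1)$ of $p_n$, so it will suffice to verify that $v''\neq0$ at each of these two points.

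At the origin I would simply differentiate $v'(r)=r\,h(r)$, obtaining $v''(0)=h(0)=p_n(0)\,Q(0)=2n\neq0$, so that $r=0$ is a non-degenerate critical point. This is consistent with the Remark following the proof of Theorem~\ref{locatcritpts}, since $v=u_1$.

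For the interior critical point $r_0$, the key observation will be that $p_n(r_0)=0$, and hence $h(r_0)=0$. Differentiating $v'(r)=r\,h(r)$ and evaluating at $r_0$ therefore leaves only
\begin{equation*}
v''(r_0)=h(r_0)+r_0\,h'(r_0)=r_0\,h'(r_0)=r_0\,p_n'(r_0)\,Q(r_0),
\end{equation*}
where the final equality again uses $p_n(r_0)=0$ to discard the term $p_n(r_0)\,Q'(r_0)$. Since $r_0>0$ and $Q(r_0)>0$, non-degeneracy at $r_0$ reduces to the single inequality $p_n'(r_0)\neq0$. But this is precisely the statement that $r_0$ is a \emph{simple} root of $p_n$, which was already secured in the proof of Theorem~\ref{critptsbeta1}: the double root of $p_n$ at $r=1$, together with Descartes's rule of signs and Lemma~\ref{interlemma}, forces $p_n$ to have exactly one simple zero in $(0,1)$. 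Hence $v''(r_0)\neq0$, and both critical points are non-degenerate, so $v$ is a Morse function.

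The computations involved are routine, and I do not expect a genuine obstacle: the only ingredient carrying real content is the simplicity of $r_0$, and that has already been established. The one point I would take care with is the Leibniz-rule bookkeeping at $r_0$, namely confirming that the vanishing of $p_n(r_0)$ eliminates every contribution to $v''(r_0)$ except $r_0\,p_n'(r_0)\,Q(r_0)$; beyond that the argument is immediate.
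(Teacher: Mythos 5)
Your argument is correct and is essentially identical to the paper's own proof: both factor $v'(r)=2nr\,p_n(r)(1-r^2)^{-2}(1+r^n)^{-2}$, use $p_n(r_0)=0$ to reduce $v''(r_0)$ to $2nr_0p_n'(r_0)(1-r_0^2)^{-2}(1+r_0^n)^{-2}$, and invoke the simplicity of the zero $r_0\in(0,1)$ established in the proof of Theorem~\ref{critptsbeta1}, together with a direct check at the origin. Your write-up is in fact slightly more explicit than the paper's in spelling out why $p_n'(r_0)\neq0$ and in computing $v''(0)=2n$.
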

\begin{proof} 
We compute the first and the second derivatives as follows:
\begin{empheq}{align*}
v'(r)&=2nrp_n(r)(1-r^2)^{-2}(1+r^n)^{-2},\\
v''(r)&=2n(1-r^2)^{-4}(1+r^n)^{-4}\\
&\times\{(p_n(r)+rp_n'(r))(1-r^2)^2(1+r^n)^2+rp_n(r)((1-r^2)^2(1+r^n)^2)'\}.
\end{empheq}
Suppose that $r_0\neq0$ is a critical point of $v(r)$, that is $v'(r_0)=0$. Then $p_n(r_0)=0$ for all $n\geq3$, and
\begin{empheq}{align*}
& v''(r_0)=2n(1-r_0^2)^{-4}(1+r_0^n)^{-4}\times\\
& \{(p_n(r_0)+r_0p_n'(r_0))(1-r_0^2)^2(1+r_0^n)^2+r_0p_n(r_0)((1-r^2)^2(1+r^n)^2)'(r_0)\}.
\end{empheq}
so that
\begin{equation*}
v''(r_0)=\frac{2nr_0p_n'(r_0)}{(1-r_0^2)^2(1+r_0^n)^2}\neq0.
\end{equation*}
It is clear from the expressions for $v'(r)$ and $v''(r)$ that $r=0$ is a critical point for $v(r)$ and that it is not degenerate. Therefore all critical points of $v(r)$ are non-degenerate, which means that $v(r)$ is Morse on $[0,1)$.
\end{proof}

{\bf Proof of Theorem $\ref{extres}$.} First we will need a few facts from the theory of real analytic functions \cite{KrantzParks}. In particular, we will need
\begin{proposition}[Identity Theorem]
Let $D\subset\mathbb R^m$ be connected and let $f$ be a real-analytic function on $D$. If there is a non-empty set $U\subset D$ such that $f(x)=0$ for all $x\in U$, then $f\equiv 0$ on $D$.
\end{proposition}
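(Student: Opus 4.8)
The plan is to run the classical ``clopen set'' argument, isolating the single place where real-analyticity (as opposed to mere smoothness) is indispensable. Interpreting the hypothesis in its intended form --- namely that $U$ has nonempty interior, so that $f$ vanishes on some open ball --- I would introduce
\[
E=\{x\in D:(\partial^{\alpha}f)(x)=0\ \text{for every multi-index}\ \alpha\},
\]
the set of points at which $f$ together with all of its partial derivatives of all orders vanishes, and then show that $E$ is simultaneously non-empty, closed in $D$, and open in $D$. Connectedness of $D$ will then force $E=D$, and since $f=\partial^{\mathbf{0}}f=0$ on $E$, the conclusion $f\equiv0$ follows at once.

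Establishing that $E$ is non-empty and closed is routine. For non-emptiness, since $f$ vanishes identically on the open ball contained in $U$, every partial derivative of $f$ also vanishes there, so that ball lies in $E$. For closedness, each map $x\mapsto(\partial^{\alpha}f)(x)$ is continuous (indeed real-analytic), so the level set $\{\partial^{\alpha}f=0\}$ is relatively closed in $D$, and $E=\bigcap_{\alpha}\{\partial^{\alpha}f=0\}$ is an intersection of closed sets, hence closed.

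The heart of the matter --- and the step I expect to be the only genuine obstacle --- is the openness of $E$, which is exactly where analyticity enters. Fixing $x_{0}\in E$, I would invoke the definition of real-analyticity to obtain a ball $B(x_{0},\rho)\subseteq D$ on which $f$ agrees with its Taylor series centered at $x_{0}$. The Taylor coefficients are the numbers $(\partial^{\alpha}f)(x_{0})/\alpha!$, all of which vanish because $x_{0}\in E$; hence $f\equiv0$ on $B(x_{0},\rho)$, and therefore so do all of its derivatives, giving $B(x_{0},\rho)\subseteq E$. This is precisely the implication that fails for merely $C^{\infty}$ functions --- a flat bump function vanishes to infinite order at a point without vanishing nearby --- so it is the use of the convergent power-series representation, not just infinite differentiability, that makes the argument go through.

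Finally, a connected set $D$ admits no proper non-empty subset that is both open and closed, so the three properties above force $E=D$, and in particular $f\equiv0$ on $D$, as claimed.
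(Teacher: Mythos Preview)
Your argument is the standard and correct proof of the identity theorem for real-analytic functions. Note, however, that the paper does not actually prove this proposition: it is quoted as a background result from Krantz--Parks and used as a tool in the proof of Theorem~\ref{extres}. So there is no ``paper's own proof'' to compare against; you have simply supplied the classical clopen argument that the reference would contain.

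One point worth making explicit: as literally stated in the paper, the hypothesis ``non-empty set $U$'' is too weak (take $f(x)=x$ and $U=\{0\}$), and you were right to read it as ``non-empty \emph{open} set $U$'' before proceeding. With that reading your three steps --- $E$ non-empty via vanishing on a ball, $E$ closed as an intersection of zero sets of continuous functions, $E$ open via the convergent Taylor expansion at each $x_0\in E$ --- are all sound, and the connectedness of $D$ finishes the job.
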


We will also be making use of 

\begin{proposition}[Real Analytic Implicit Function Theorem]
Suppose $F:\mathbb R^2\rightarrow \mathbb R$ is real analytic in a neghborhood of $(x_0,y_0)$ for some $x_0\in\mathbb R$ and some $y_0\in\mathbb R$. If $F(x_0,y_0)=0$ and
\begin{equation*}
\frac{\partial F(x_0,y_0)}{\partial y}\neq0,
\end{equation*}
then there exists a function $h:\mathbb R\rightarrow \mathbb R$ which is real analytic in a neighborhood of $x_0$ and such that
\begin{equation*}
F(x,h(x))=0
\end{equation*}
holds in a neighborhood of $x_0$.
\end{proposition}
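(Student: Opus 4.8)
The plan is to produce $h$ in two stages: first obtain a real solution of low regularity from the classical implicit function theorem, and then upgrade it to real-analyticity by passing to a holomorphic extension of $F$ and invoking the holomorphic implicit function theorem, using uniqueness to identify the two solutions.

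First I would invoke the classical ($C^1$) implicit function theorem. Real-analyticity of $F$ near $(x_0,y_0)$ makes $F$ in particular $C^1$ there, and the hypotheses $F(x_0,y_0)=0$, $\partial F/\partial y(x_0,y_0)\neq 0$ are precisely its hypotheses. This gives open intervals $I\ni x_0$ and $J\ni y_0$ and a $C^1$ function $h:I\to J$ with $h(x_0)=y_0$ and $F(x,h(x))=0$ on $I$; crucially, $h(x)$ is the only value in $J$ solving $F(x,\cdot)=0$ for each $x\in I$. This settles existence of a real solution but says nothing about its analyticity.

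Next I would complexify. Since $F$ is real-analytic at $(x_0,y_0)$, its Taylor expansion there converges on a bidisc and extends $F$ to a holomorphic function $\tilde F(z,w)$ of two complex variables near $(x_0,y_0)\in\mathbb C^2$, with $\partial\tilde F/\partial w(x_0,y_0)=\partial F/\partial y(x_0,y_0)\neq 0$. Applying the holomorphic implicit function theorem yields a holomorphic $\tilde h$ on a complex disc about $x_0$ with $\tilde h(x_0)=y_0$, $\tilde F(z,\tilde h(z))=0$, and $\tilde h(z)$ the unique root near $y_0$. For real $x$ near $x_0$ the real number $h(x)\in J$ is then a root of $\tilde F(x,\cdot)=F(x,\cdot)$ lying near $y_0$, so by this uniqueness $h(x)=\tilde h(x)$. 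Hence $h$ is the restriction of the holomorphic function $\tilde h$ to a real interval, and such a restriction is real-analytic; this is exactly the conclusion.

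The main obstacle is purely one of bookkeeping: one must shrink $I$, $J$, and the complex polydisc in a compatible way so that $h(I)\subset J$ stays inside the region where the complex root is unique, which is what legitimizes the identification $h=\tilde h$ on the reals. A self-contained alternative avoids complex analysis: write $h$ as a formal power series, solve for its coefficients recursively from $F(x,h(x))\equiv 0$ (the leading step uses $\partial F/\partial y(x_0,y_0)\neq 0$, which keeps the recursion nonsingular), and establish convergence by Cauchy's method of majorants. That route is elementary but the majorant convergence estimate is the genuinely laborious part, so I prefer the complexification argument.
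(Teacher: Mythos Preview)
Your proof sketch is correct, and both routes you outline (complexification plus the holomorphic implicit function theorem, or a direct majorant argument) are standard ways to establish the real-analytic implicit function theorem. The bookkeeping caveat you flag is the only place one has to be careful, and you have identified it correctly.

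However, the paper does not prove this proposition at all: it is quoted as a known fact from the reference \cite{KrantzParks} (Krantz--Parks, \emph{A Primer of Real Analytic Functions}), alongside the Identity Theorem, purely as a tool to be used in the proof of Theorem~\ref{extres}. So there is no ``paper's own proof'' to compare against; you have supplied a proof where the paper simply cites the literature.
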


Let us now briefly mention a few standard facts from the theory of stable mappings that will be used in the content of the discussion below.

Let $X$ and $Y$ be smooth manifolds. Denote by $C^{\infty}(X,Y)$ the set of smooth mappings from $X$ to $Y$. The set $C^{\infty}(X,Y)$ equipped with Whitney topology becomes a topological space (for details see \cite{GolubG}).\newline\indent
Let $\varphi$ and $\psi$ be elements of $C^{\infty}(X,Y)$. We will call $\varphi$ and $\psi$ {\it isotopic} if there exist diffeomophisms $f:X\rightarrow X$ and $g:Y\rightarrow Y$, each homotopic to the identity on their respective space, such that $\varphi=g\circ\psi\circ f$. An element $\varphi$ of $C^{\infty}(X,Y)$ will be called {\it stable} if there is a neighborhood $W_{\varphi}$ of $\varphi$ in $C^{\infty}(X,Y)$ such that each $\psi$ in $W_{\varphi}$ is isotopic to $\varphi$. Note that the stable mappings in $C^{\infty}(X,Y)$ always form an open subset.\newline\indent
Now employ the following observation, which can be found, for example, in \cite{Johnson}. Assume that two stable mappings in $C^{\infty}(X,Y)$ are connected by a path $\gamma$ consisting of stable mappings. Then we can cover $\gamma$ by a finite collection of open sets such that any two mappings in each set are isotopic. Using an induction argument it follows that any two mappings in $\gamma$ are isotopic. Recalling that any two isotopic mappings have the same number of critical points, it follows that any two mappings in $\gamma$ will have the same number of critical points.\newline\indent
Suppose $X$ is a compact manifold and let $\varphi$ be an element of  $C^{\infty}(X,\mathbb R)$. Then $\varphi$ is stable if and only if $\varphi$ is a Morse function whose critical values are pairwise distinct. In particular, it follows that if we have two Morse functions with distinct critical values connected by a path $\gamma$ in $C^{\infty}(X,\mathbb R)$ consisting of Morse functions with distinct critical values, any two functions in $\gamma$ will have the same number of the critical points.

We also note \cite{GuiPol}[Ex. 19, p. 47] that if $f$ is a Morse function with not necessarily distinct critical values, we can find a function $\tilde{f}$ that has the same critical points as $f$ and is arbitrarily close to $f$ in the $C^2$ topology. Moreover, the critical values of $\tilde{f}$ are distinct.

Now prove the following easy but important fact:

\begin{lemma}\label{analytofpot}
The potential $u_\beta(r)$ is a real analytic function in $\beta, r$.
\end{lemma}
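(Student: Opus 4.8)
The plan is to reduce everything to the finite-sum representation (\ref{trig1}) and to recognize each summand as a real-analytic composition. First I would write
\[
u_\beta(r)=\sum_{j=1}^n f_j(r)^{-\beta},\qquad f_j(r)=1+r^2-2r\cos\theta_j,\quad \theta_j=\frac{\pi(2j-1)}{n},
\]
so that the whole task is to establish joint real analyticity of a single generic summand $(\beta,r)\mapsto f_j(r)^{-\beta}$, since a finite sum of real-analytic functions is again real analytic.

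The crucial preliminary observation is that $f_j$ is a polynomial in $r$, hence real analytic on all of $\mathbb R$, and that $f_j(r)>0$ throughout the relevant domain. To verify positivity cleanly I would use the identity $f_j(r)=|1-re^{i\theta_j}|^2$, whose right-hand side vanishes only when $re^{i\theta_j}=1$, i.e. when $r=1$ and $\theta_j\equiv 0 \pmod{2\pi}$. Since $r\in[0,1)$ this never happens, so $f_j(r)>0$ on $[0,1)$; in fact $f_j$ stays strictly positive on an open interval containing $[0,1)$, which makes $r=0$ an interior point and lets me speak of genuine (joint) real analyticity there rather than mere one-sided behavior.

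With positivity in hand I would write $f_j(r)^{-\beta}=\exp\bigl(-\beta\ln f_j(r)\bigr)$. The map $r\mapsto \ln f_j(r)$ is real analytic as the composition of the real-analytic logarithm on $(0,\infty)$ with the strictly positive polynomial $f_j$; multiplying by the (trivially analytic) variable $\beta$ yields a function real analytic jointly in $(\beta,r)$, and composing with the entire exponential preserves real analyticity. Hence each summand, and therefore $u_\beta(r)$, is real analytic in $(\beta,r)$ for all $\beta\in\mathbb R$ and $r\in[0,1)$.

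I do not expect a serious obstacle here; the only points requiring care are confirming the strict positivity of the bases $f_j(r)$ so that the logarithm — and hence the real power $f_j^{-\beta}$ — is well defined and analytic, and checking that the resulting analyticity is joint in the two variables rather than separate. The modulus identity $f_j(r)=|1-re^{i\theta_j}|^2$ dispatches positivity in one line, and routing the power through $\exp\circ(-\beta\ln(\cdot))$ makes the joint analyticity transparent, so the argument is short and self-contained.
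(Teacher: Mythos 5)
Your proposal is correct and follows essentially the same route as the paper: write each summand as $\exp\bigl(-\beta\log(1+r^2-2r\cos\theta_j)\bigr)$ and invoke closure of real analyticity under composition and finite sums. The only difference is that you explicitly justify the strict positivity of $1+r^2-2r\cos\theta_j$ via the identity with $|1-re^{i\theta_j}|^2$, which the paper simply asserts; this is a harmless (indeed welcome) addition.
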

\begin{proof}
Note that $(1+r^2-2r\cos(\theta_j))>0$ is an analytic function of $r$. Hence $\log(1+r^2-2r\cos(\theta_j))$ is also analytic in $r$. Next, observe that
$(1+r^2-2r\cos(\theta_j))^{-\beta}=\exp(-\beta\log(1+r^2$ $-2r\cos(\theta_j))$, which is an analytic function in $\beta, r$.
\end{proof}

We are now ready to state our extension result for the potential $u_\beta(r)$.
\begin{lemma}[Extension Theorem for $u_\beta(r)$]
There exist real numbers $\beta_l$ and $\beta_r$ such that $1\in(\beta_l,\beta_r)$ so that for each $\beta\in(\beta_l,\beta_r)$ the potential $u_\beta(r)$ has a unique critical point different from the origin, as well as a critical point at the origin.
\end{lemma}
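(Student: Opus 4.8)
The plan is to use $\beta=1$ as an anchor, where Theorem~\ref{critptsbeta1} and Lemma~\ref{morse} already give complete information, and to transport that information to nearby $\beta$ by analyticity, combined with the a priori localization of critical points furnished by Theorem~\ref{locatcritpts}. At $\beta=1$ we have $u_1=v$, which by Theorem~\ref{critptsbeta1} has exactly two critical points on $[0,1)$, namely the origin and a single $r_0\in(0,\cos(\pi/n))$, and by Lemma~\ref{morse} both are nondegenerate, so $v''(r_0)\neq0$; moreover $u_\beta''(0)=2\beta^2n>0$ for every $\beta$, so the origin is a nondegenerate critical point for all $\beta$.

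Next I would confine all critical points to a fixed compact interval, uniformly for $\beta$ near $1$. By Theorem~\ref{locatcritpts} every nontrivial critical point of $u_\beta$ lies in $(r_l(n,\beta),r_u(n))$ with $r_u(n)=\cos(\pi/n)$ independent of $\beta$ and $r_l(n,\beta)=(\beta/(\beta+n))^{1/(n-2)}$ increasing in $\beta$. Fixing $\delta\in(0,1)$, choosing $a$ with $0<a<r_l(n,1-\delta)$, and setting $K=[a,\cos(\pi/n)]\subset(0,1)$, I get that for all $\beta\in(1-\delta,1+\delta)$ every nontrivial critical point lies in $\operatorname{int}(K)$, while Lemmas~\ref{leftinterv} and \ref{rightinterv} give $u_\beta'>0$ on $(0,r_l(n,\beta))$ and $u_\beta'<0$ on $[r_u(n),1)$; in particular $u_\beta'(a)>0$ and $u_\beta'(\cos(\pi/n))<0$, so there are no critical points on $\partial K$ and none on $[0,1)$ off $K$ except the origin. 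This localization is what renders the non-compact domain harmless.

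I would then propagate the two critical points and rule out new ones. By Lemma~\ref{analytofpot} the map $(\beta,r)\mapsto u_\beta'(r)$ is real analytic; since $u_1'(r_0)=0$ and $\partial_r u_1'(r_0)=v''(r_0)\neq0$, the Real Analytic Implicit Function Theorem supplies an analytic $h$ with $h(1)=r_0$ and $u_\beta'(h(\beta))=0$ near $\beta=1$, so the nontrivial critical point persists and stays simple, while the origin persists since $u_\beta'(0)=0$ identically. To see that no further critical point appears, I would invoke the stable-mapping framework assembled above: on the compact $K$ the function $u_1|_K$ is Morse with a single interior critical point and none on the boundary, hence stable; since the stable maps form an open set and $\beta\mapsto u_\beta|_K$ is continuous in the Whitney topology, for $\beta$ in some interval $(\beta_l,\beta_r)\ni1$ the restriction $u_\beta|_K$ is isotopic to $u_1|_K$ and therefore has exactly one critical point in $K$. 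Together with the origin this yields exactly two critical points of $u_\beta$ on $[0,1)$, after shrinking $(\beta_l,\beta_r)$ into $(1-\delta,1+\delta)$.

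I expect the main obstacle to be precisely this last step, since $[0,1)$ is not compact and the origin lies on its boundary, so the cited stability theorems, stated for compact manifolds, do not apply verbatim. The resolution is to treat the origin separately as a fixed nondegenerate critical point and to lean on the confinement of Theorem~\ref{locatcritpts}, with $u_\beta'$ of constant sign off $K$, to reduce the count of nontrivial critical points to a genuinely compact problem on $K$. An equivalent and fully elementary way to close this gap, bypassing the doubling of $K$ that would be needed to make the stability statement literal, is a compactness argument: were there critical points $s_{\beta_k}\in K$ with $\beta_k\to1$ lying off the graph of $h$, a subsequence would converge to the unique zero $r_0$ of $u_1'$ in $K$, contradicting the local uniqueness guaranteed by the Implicit Function Theorem.
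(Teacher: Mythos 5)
Your proposal is correct, and its skeleton coincides with the paper's: anchor at $\beta=1$ via Theorem~\ref{critptsbeta1} and Lemma~\ref{morse}, use Lemma~\ref{analytofpot} and the Real Analytic Implicit Function Theorem to continue the nondegenerate critical point $r_0$ analytically in $\beta$, and then rule out the birth of additional critical points. Where you genuinely diverge is in that last step. The paper restricts to $[0,\cos(\pi/n)]$ and invokes the stable-mappings machinery (a path of Morse functions, openness of stable maps, isotopy preserving the number of critical points), glossing over the facts that this interval is a manifold with boundary, that the origin sits on that boundary, and that stability requires distinct critical values. You instead (i) exploit the monotonicity of $r_l(n,\beta)$ in $\beta$ to confine all nontrivial critical points to a fixed compact $K=[a,\cos(\pi/n)]$ with $u_\beta'$ of constant sign at and beyond $\partial K$, uniformly for $\beta$ near $1$, and (ii) close the argument with an elementary compactness/IFT contradiction: a sequence of spurious critical points in $K$ with $\beta_k\to1$ would accumulate at the unique zero $r_0$ of $u_1'$ in $K$ and violate the local uniqueness of the implicit-function graph. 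This buys you a fully rigorous and self-contained count that does not lean on stability theorems stated for closed manifolds, at the cost of bypassing the (more general, but here slightly misapplied) isotopy framework; in effect your version repairs the softest spot in the paper's own proof. The only point worth making explicit in a final write-up is the continuity of $(\beta,r)\mapsto u_\beta'(r)$ on $(\beta_l,\beta_r)\times K$ used to pass to the limit $u_1'(s^*)=0$, which follows at once from Lemma~\ref{analytofpot}.
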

\begin{proof}
Let $g:\mathbb R^2\rightarrow \mathbb R$ be defined as $g(\beta,r):=u'_\beta(r)$. By Lemma $\ref{analytofpot}$ the potential $u_\beta(r)$ is analytic in $\beta$, $r$, so its derivatives with respect to $\beta$ and $r$ are also analytic in $\beta$, $r$. This implies that the function $g(\beta,r)$ is analytic in $\beta$, $r$. Let $r_0$ be a non-zero critical point of the potential $u_{\beta_0}(r)$ for $\beta_0=1$. By Lemma $\ref{morse}$ we know that $r_0$ is a non-degenerate critical point, that is $u''_{\beta_0}(r_0)\neq0$. Therefore
\begin{equation*}
g(\beta_0,r_0)=0, \quad \frac{\partial g(\beta_0,r_0)}{\partial r}\neq0.
\end{equation*}
By the Real Analytic Implicit Function Theorem there exist a neighborhood $U_0=(a,b)\times(c,d)$ of $(\beta_0,r_0)$ and an analytic function $h:\mathbb R\rightarrow \mathbb R$ such that $\{(\beta,r)\in U_0: g(\beta,r)=0\}=\{(\beta,h(\beta)):\beta\in (a,b)\}$. That is, $u'_\beta(h(\beta))=0$ in $(a,b)$, and $h(\beta)$ is the only such a root of $u'_\beta(r)=0$.

We conclude that for each $\beta$ in a neighborhood of $\beta_0=1$ there exists a unique critical point of $u_\beta(r)$, call it $r_\beta$, and $r_\beta$ depends analytically on $\beta$.

Now observe that from the Identity Theorem it follows that an analytic function on a closed interval can only have a finite number of zeros. Therefore $\partial g(\beta,r)/\partial r$ has a finite number of zeros. One may shrink $U_0$ such that $\partial g(\beta,r)/\partial r\neq0$ on $U_0$ (since $\partial g(\beta,r)/\partial r\neq0$ at $(\beta_0,r_0)$). Denote an interval of such $\beta$'s as $(\beta_l,\beta_r)$. Then all critical points $r_\beta$ are non-degenerate for $\beta\in(\beta_l,\beta_r)$. Note that $1\in(\beta_l,\beta_r)$.

After these preliminary remarks let us make the following observation. Since we know that the potential $u_\beta(r)$ has no critical points on $(\cos(\pi/n), 1)$, we can assume that $r\in[0,\cos(\pi/n)]$ and $\beta\in(\beta_l,\beta_r)$. It is clear that $u_\beta\in C^{\infty}([0,\cos(\pi/n)],\mathbb R)$ for all $\beta\in(\beta_l,\beta_r)$.

Next, note that  for each $\beta\in(\beta_l,\beta_r)$ the potential $u_\beta(r)$ is a Morse function. Hence $u_\beta$ defines a path in $C^{\infty}([0,\cos(\pi/n)],\mathbb R)$. Therefore by above considerations any two functions in the path defined by $u_\beta$ have the same number of critical points. As we have shown that $u_\beta(r)$ for $\beta=1$ has exactly one critical point (excluding the origin), and $1\in(\beta_l,\beta_r)$, it follows that for any $\beta\in(\beta_l,\beta_r)$ the potential $u_\beta(r)$ also has exactly one critical point (excluding origin). The proof of the lemma is complete.
\end{proof}
Recalling that the potential $u_\beta(r)$ is a restriction of the 2D potential $U_\beta(r,\theta)$ to the perpendicular bisectors to the sides, the statement of the theorem follows.\qed

{\bf Proof of Example {\ref{ex3}}}

In this case the Riesz potential on the bisectors is of the form
\begin{equation*}
u_\beta(r)=\frac{1}{(1+r)^{2\beta}}+\frac{2}{(1+r^2-r)^\beta},\quad r\in(0,1).
\end{equation*}
Its derivative is
\begin{empheq}{align*}
u'_\beta(r) & = \frac{-2\beta}{(1+r)^{2\beta+1}}+\frac{-2\beta(2r-1)}{(1+r^2-r)^{\beta+1}}\\
& = \frac{-2\beta}{(1+r)^{2\beta+1}(1+r^2-r)^{\beta+1}}((2r-1)(1+r)^{2\beta+1}+(1+r^2-r)^{\beta+1})\\
& = -2\beta(1+r)^{-(2\beta+1)}(1+r^2-r)^{-(\beta+1)}f_\beta(r),
\end{empheq}
where $f_\beta(r):=(1+r)^{2\beta+1}(2r-1)+(1+r^2-r)^{\beta+1}$. Differentiating, we obtain
\begin{equation*}
f'_\beta(r)=(2\beta+1)(1+r)^{2\beta}(2r-1)+2(1+r)^{2\beta+1}+(\beta+1)(1+r^2-r)^{\beta}(2r-1).
\end{equation*}
For the second derivative we have
\begin{empheq}{align*}
f''_\beta(r) & = (2\beta+1)(2\beta(1+r)^{2\beta-1}(2r-1)+2(1+r)^{2\beta})\\
& + 2(2\beta+1)(1+r)^{2\beta}\\
& + (\beta+1)(\beta(1+r^2-r)^{\beta-1}(2r-1)^2+2(1+r^2-r)^\beta)\\
& = 2(2\beta+1)(1+r)^{2\beta-1}(\beta(2r-1)+(1+r))\\
& + 2(2\beta+1)(1+r)^{2\beta}\\
& + (\beta+1)(\beta(1+r^2-r)^{\beta-1}(2r-1)^2+2(1+r^2-r)^\beta)\\
& = 2(2\beta+1)(1+r)^{2\beta-1}((2\beta+1)r+(1-\beta))\\
& + 2(2\beta+1)(1+r)^{2\beta}\\
& + (\beta+1)(\beta(1+r^2-r)^{\beta-1}(2r-1)^2+2(1+r^2-r)^\beta).\\ 
\end{empheq}
As $\beta\in(0,1)$, it follows that $(1-\beta)>0$, and we see from the above that $f''_\beta>0$ for all $r\in(0,1)$ and all $\beta\in(0,1)$. That says that $f_\beta$ is convex on $(0,1)$ for all $\beta\in(0,1)$.

Also observe that $f_\beta(0)=-1+1=0$. It is not hard to see that $f_\beta(1/2)=(3/4)^{\beta+1}>0$, and a simple calculation shows $f_\beta(\beta/(\beta+1))<0$ for all $\beta\in(0,1)$.

The above facts combined show that $f_\beta$ has a unique positive root on $[0,1/2]$.

But the roots of $f_\beta(r)$ are exactly the critical points of the potential $u_\beta(r)$, and for the case $n=3$ we know that the non-trivial critical points are located on $(0,\cos(\pi/3)]=(0,1/2]$. Hence we have shown that the potential $u_\beta$ has a unique non-trivial critical point for all $\beta\in(0,1)$. Recalling that the potential $u_\beta(r)$ is a restriction of the 2D Riesz potential $U_\beta(r,\theta)$ to the perpendicular bisectors, we conclude that $U_\beta(r,\theta)$ has exactly four critical points in the case of an equilateral triangle for {\it all} $\beta\in(0,1)$, as claimed.
\qed

\section*{\large \textbf{Acknowledgements}}
The author would like to thank his doctoral advisor Prof. Igor E. Pritsker for suggesting the problem and his constant attention to the paper. The author also wishes to thank Prof. Jesse Johnson and Prof. Roger Zierau for valuable discussions.



\end{document}